\theoremstyle{plain}
\newtheorem{theorem}{Theorem}[section]
\newtheorem{lemma}[theorem]{Lemma}
\newtheorem{corollary}[theorem]{Corollary}
\newtheorem{proposition}[theorem]{Proposition}
\begin{document}
   
\title{Generating all finite modular 
lattices of a given size} 

\author[P. Jipsen]{Peter Jipsen}
\email{jipsen@chapman.edu}
\urladdr{http://www1.chapman.edu/~jipsen}
\address{Chapman University, Orange, CA, United States}

\author[N. Lawless]{Nathan Lawless}
\email{lawle108@mail.chapman.edu}
\urladdr{http://www.nlawless.com}
\address{Chapman University, Orange, CA, United States}

\dedicatory{Dedicated to Brian Davey on the occasion of his 65th birthday}



\begin{abstract}
Modular lattices, introduced by R.~Dedekind, are an important
subvariety of lattices that includes all distributive lattices.
Heitzig and Reinhold \cite{HR02} developed an algorithm to enumerate,
up to isomorphism, all finite lattices up to size 18. Here we adapt
and improve this algorithm to construct and count modular lattices up
to size 24, semimodular lattices up to size 22, and lattices of
size 19. We also show that $2^{n-3}$ is a lower bound for the number
of nonisomorphic modular lattices of size $n$.
\end{abstract}

\maketitle

\section{Introduction}

Enumeration of finite mathematical structures is an important tool
since it allows testing new hypotheses and searching for
counterexamples.  Additionally, it provides insight into the
properties of these structures. Here we concentrate on constructing,
up to isomorphism, all modular lattices with a given number of
elements. The algorithm we develop is a modification of the approach
of Heitzig and Reinhold~\cite{HR02} who enumerated (up to isomorphism)
all lattices with up to 18 elements.  The number of distributive
lattices of size up to 49 were calculated by Ern\'e, Heitzig and
Reinhold~\cite{EHR02}. In the Online Encyclopedia of Integer Sequences
(\texttt{oeis.org}) the relevant sequences are A006981, A006966 and
A006982, but the sequence for the number of modular lattices was given
only up to $n=11$. For $n=12$ there are 766 nonisomorphic modular
lattices, as was reported in~\cite{BV10}. We extend this result to
$n=24$ and also count the number of semimodular lattices up to size
$n=22$ (see Table~\ref{table1}).

Our algorithm uses an improved method for removing isomorphic copies,
which allowed us to recalculate the numbers in~\cite{HR02} for all 
lattices up to $n=18$ and go one step further to find the number of
nonisomorphic lattices with 19 elements. The calculations were
done on a cluster of 64 processors and took 26 hours for $n=18$ and
19 days for $n=19$.

In the remainder of this section, we define some properties and recall
some basic results of (semi)modular lattices. 
In Section 2, we give an outline of the algorithm used by \cite{HR02} to 
generate finite lattices up to isomorphism.
Then, in Section 3, we adapt this algorithm to generate modular lattices up 
to isomorphism by adding a series of constraints to the algorithm.
Section 4 contains an improvement for the algorithm used by \cite{HR02} by 
employing the canonical construction path introduced in \cite{McK98}.
In Section 5, the algorithm is adjusted to generate only vertically 
indecomposable modular lattices.

A \emph{modular lattice} $L$ is a lattice which satisfies the modular law
\begin{align*} a\geq c \text{ implies }a\wedge(b\vee c)=(a\wedge b)\vee 
c\text{ for all }a,b,c\in L.\end{align*}

Weaker conditions of modularity are semimodularity and lower semimodularity. As
usual, we write $a\prec b$ if $a$ is \emph{covered by} $b$.
        
A lattice $L$ is \emph{semimodular} if for all $a,b\in L$
\begin{align*}
a\wedge b\prec a,b \text{ implies that } a,b\prec a\vee b.
\end{align*}

Dually, $L$ is \emph{lower semimodular} if for all $a,b\in L$
\begin{align*}
a,b\prec a\vee b \text{ implies that }a\wedge b\prec a,b.
\end{align*}
Recall that a \emph{chain} in a lattice $L$ is a subset of $L$ such
that all elements in the subset are comparable.  We say that a lattice
has \emph{finite length} if all chains in it have finite cardinality.  
The next two well-known results below can be 
found for example in \cite{Gr98}.
	
	\begin{proposition}\label{mod-semimod}
A lattice of finite length is modular if and only if it is semimodular 
and lower semimodular.
	\end{proposition}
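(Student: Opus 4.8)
The plan is to prove both implications by passing to a height (rank) function $h$, which converts the lattice-theoretic modular law into an additive identity that is far easier to manipulate. For the forward direction, suppose $L$ is modular. Since the modular law is self-dual, it suffices to prove semimodularity, and lower semimodularity then follows by dualizing. I would invoke the diamond (transposition) isomorphism: in a modular lattice the map $x\mapsto x\vee a$ is an isomorphism of $[a\wedge b,b]$ onto $[a,a\vee b]$ with inverse $y\mapsto y\wedge b$, both directions being immediate applications of the modular law. Hence if $a\wedge b\prec b$ then $[a\wedge b,b]$ has length one, so its isomorphic image $[a,a\vee b]$ does too, giving $a\prec a\vee b$; using the symmetric isomorphism $[a\wedge b,a]\cong[b,a\vee b]$ with the hypothesis $a\wedge b\prec a$ gives $b\prec a\vee b$. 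This establishes semimodularity, and dually lower semimodularity.

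For the converse, assume $L$ has finite length and is both semimodular and lower semimodular. The key preliminary is the Jordan--H\"older chain condition: in a semimodular lattice of finite length, any two maximal chains with the same endpoints have equal length. I would prove this by induction on the length of the shorter chain, using semimodularity to transpose a covering step of one chain across the other and collapse the discrepancy. Granting this, a height function $h$ is well defined by taking $h(x)$ to be the common length of the maximal chains from the bottom up to $x$. Semimodularity then upgrades to the submodular inequality $h(a\vee b)+h(a\wedge b)\le h(a)+h(b)$: a maximal chain in $[a\wedge b,b]$ is carried by semimodularity to a (possibly shorter, due to collapses) chain in $[a,a\vee b]$, so the length of $[a,a\vee b]$ is at most that of $[a\wedge b,b]$. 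Dually, lower semimodularity yields the reverse inequality, and the two together force the exact rank identity $h(a\vee b)+h(a\wedge b)=h(a)+h(b)$.

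Finally I would deduce the modular law itself. In any lattice, $a\ge c$ forces $(a\wedge b)\vee c\le a\wedge(b\vee c)$, so it remains only to show these two elements have the same height. Using the rank identity repeatedly, together with $c\le a$ (so that $a\wedge c=c$, $a\vee(b\vee c)=a\vee b$, and $(a\wedge b)\wedge c=b\wedge c$), a direct computation gives
\[
h\bigl(a\wedge(b\vee c)\bigr)=h(a)+h(b)+h(c)-h(b\wedge c)-h(a\vee b)=h\bigl((a\wedge b)\vee c\bigr).
\]
Since the two sides of the modular law are comparable and have equal height, and in a finite-length lattice a strict inequality $y<x$ forces $h(y)<h(x)$, they must coincide. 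I expect the main obstacle to be the Jordan--H\"older step and the passage from the covering form of semimodularity to the global submodular inequality for $h$; once the rank identity is in hand, the remaining argument is routine bookkeeping.
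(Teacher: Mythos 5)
The paper does not prove this proposition at all---it is stated as a well-known result and deferred to Gr\"atzer's \emph{General Lattice Theory} \cite{Gr98}, where the argument is essentially the one you outline (diamond transposition isomorphism for the forward direction; Jordan--H\"older, a rank function, and the submodular/supermodular inequalities for the converse). Your sketch is correct in outline, and you have rightly flagged the only delicate point: passing from the covering form of semimodularity used here ($a\wedge b\prec a,b$ implies $a,b\prec a\vee b$, i.e.\ Birkhoff's condition) to the chain-transposition step and the global inequality $h(a\vee b)+h(a\wedge b)\le h(a)+h(b)$ requires the finite-length hypothesis and a genuine induction, since Birkhoff's condition alone does not directly give ``$x\prec y$ implies $x\vee z\preceq y\vee z$''.
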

	
A chain $C$ in a poset $P$ is \emph{maximal} if whenever $C\subseteq
D\subseteq P$ and $D$ is a chain in $P$, then $C=D$. In a finite
lattice, a maximal chain is a chain from bottom to top such that each
element in the chain, other than the top, is covered by some element
in the chain.

\begin{theorem}[Jordan-H\"{o}lder Chain Condition]\label{JHCC}
Let $L$ be a finite semimodular lattice. Then, for any maximal
chains $C$ and $D$ in L, $|C|=|D|$.
\end{theorem}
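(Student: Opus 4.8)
The plan is to argue by induction on $|L|$, relying on the easy but essential remark that every interval $[a,b]$ of a finite semimodular lattice is again finite and semimodular: meets, joins, and the covering relation inside $[a,b]$ all agree with those of $L$, so the defining implication is inherited. Since $L$ is finite it has a least element $0$ and a greatest element $1$, and every maximal chain runs from $0$ to $1$. I will record the \emph{length} of a maximal chain as its number of covering steps; a chain of length $\ell$ has $\ell+1$ elements, so proving that any two maximal chains have equal length is equivalent to the stated claim $|C|=|D|$.

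For the inductive step I fix two maximal chains $C$ and $D$ and let $c$ and $d$ be the covers of $0$ at which they respectively leave the bottom. If $c=d$, then deleting $0$ from both chains produces maximal chains of the interval $[c,1]$, a strictly smaller semimodular lattice, so the inductive hypothesis gives them equal length and hence $C$ and $D$ the same length after restoring the common initial step.

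The interesting case is $c\neq d$, and this is precisely where semimodularity enters. Since $c$ and $d$ are distinct covers of $0$, we have $c\wedge d=0$, so $0=c\wedge d\prec c,d$, and semimodularity forces $c,d\prec c\vee d=:e$. I then fix an arbitrary maximal chain of the interval $[e,1]$, say of length $k$, and splice it above the single covers $c\prec e$ and $d\prec e$ to obtain a maximal chain of $[c,1]$ and a maximal chain of $[d,1]$, each of length $k+1$. Applying the inductive hypothesis separately in the strictly smaller intervals $[c,1]$ and $[d,1]$ shows that the part of $C$ lying above $c$ has length $k+1$, and likewise for the part of $D$ above $d$; restoring the initial step from $0$ gives both $C$ and $D$ the common length $k+2$.

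The only genuine obstacle is the case $c\neq d$: everything hinges on being able to push the two distinct atoms up to a common element $e=c\vee d$ that covers \emph{both} of them, which is exactly the force of semimodularity (without it, $e$ could sit two or more levels above $c$ and the bookkeeping would fail). Once such an $e$ is produced, the remainder is routine accounting of chain lengths through the comparison elements together with the inductive hypothesis on the smaller intervals, so I would concentrate the care on verifying the covering step $c,d\prec e$ and on confirming that intervals inherit semimodularity before launching the induction.
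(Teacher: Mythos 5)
Your proof is correct. Note that the paper itself gives no proof of this theorem: it is stated as a well-known result with a pointer to Gr\"atzer's \emph{General Lattice Theory}, so there is nothing internal to compare against. Your argument is essentially the standard textbook one: induct on $|L|$, split on whether the two chains leave $0$ through the same atom, and in the divergent case use the semimodular implication $0=c\wedge d\prec c,d\ \Rightarrow\ c,d\prec c\vee e$ to splice a common maximal chain of $[c\vee d,1]$ onto both atoms, then invoke the inductive hypothesis in the strictly smaller intervals $[c,1]$ and $[d,1]$. You correctly flag and justify the two points where care is needed --- that intervals inherit semimodularity because meets, joins, and covers in $[a,b]$ agree with those of $L$, and that the covering $c,d\prec c\vee d$ is exactly what semimodularity provides --- and the length bookkeeping ($|C|=|D|=k+2$ elements beyond the common tail, i.e.\ both chains have $k+3$ elements counting $0$, $c$ or $d$, and the $k+1$ elements of the spliced chain) checks out. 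This is precisely the proof the cited reference gives, so your proposal fills the gap the paper deliberately leaves to the literature.
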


\section{Generating finite lattices}
There are many ways to represent finite lattices and to construct bigger
lattices from smaller lattices. An algorithm that constructs up to isomorphism 
all combinatorial objects of a certain kind and of a given size is called
an \emph{orderly algorithm} if it produces exactly one member of each 
isomorphism class without testing 
that this member is nonisomorphic to previously constructed objects.
Such algorithms were first introduced by Faradzhev~\cite{Fa76} and 
Read~\cite{Re78} for enumerating finite graphs. 
Heitzig and Reinhold \cite{HR02} developed an orderly
algorithm to enumerate all finite lattices up to isomorphism and used
it to count the number of lattices up to size 18. Since our first algorithm
for modular lattices is based on their approach, we recall some of the details
here.

Let $L$ be a lattice. A nonempty antichain $A\subseteq L \setminus
\{0\}$ is a \emph{lattice-antichain} if $a\wedge b\in\{0\}\cup
     {\uparrow} A$ for all $a,b\in {\uparrow} A$, where
     ${\uparrow}A=\{b\mid b\geq a$ for some $a\in A\}$.  A finite lattice is
     called an $n$-lattice if its set of elements is
     $\{0,1,2,\ldots,n-1\}$, where $0$ and $1$ are the bottom and top
     elements.
	
Given a lattice antichain $A$ and an $n$-lattice $L$, a poset $L^A$
with $n+1$ elements is constructed by adding an element $n$ to $L$ as
an atom with $A$ as the set of its covers.  Furthermore, the following
lemma states that $L^A$ is a lattice.

\begin{lemma}[\cite{HR02}]
A subset $A\subseteq L\setminus \{0\}$ of an $n$-lattice $L$ is a
lattice-antichain if and only if $L$ is a subposet of an
$(n+1)$-lattice $L^A$ in which the element $n$ is an atom and $A$ is
the set of its covers.
\end{lemma}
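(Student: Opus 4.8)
The plan is to prove the two implications separately, in both cases leaning on the standard fact that a finite poset possessing a least and a greatest element is a lattice as soon as every pair of elements has a meet, since the join of $x$ and $y$ can then be recovered as the meet of their common upper bounds (a nonempty finite set, because the top element is always an upper bound). This reduces the existence of a lattice structure to a verification of meets. Before doing that I would record the order relations in $L^A$: the order on $L$ is left unchanged, the down-set of the new element $n$ is $\{0,n\}$, and its up-set is $\{n\}\cup{\uparrow}A$. From this it follows that, since $A$ is an antichain, the minimal elements of ${\uparrow}A$ are exactly the elements of $A$, so $n$ is an atom whose set of covers is precisely $A$, and the top of $L$ remains the top of $L^A$.

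For the forward direction ($\Rightarrow$), assuming $A$ is a lattice-antichain, I would compute $x\wedge y$ in $L^A$ by cases. Meets involving $n$ are immediate from the down-set description: $n\wedge x=n$ when $x\in{\uparrow}A$ and $n\wedge x=0$ otherwise. For $x,y\in L$, the lower bounds in $L^A$ are those in $L$ together with $n$, and $n$ is a common lower bound exactly when $x,y\in{\uparrow}A$. In that situation the lattice-antichain hypothesis gives $x\wedge_L y\in\{0\}\cup{\uparrow}A$: if $x\wedge_L y\in{\uparrow}A$ then $x\wedge_L y>n$ and the meet is unchanged, whereas if $x\wedge_L y=0$ the meet rises to $n$. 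In every case a meet exists, so $L^A$ is a lattice with the stated structure.

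For the reverse direction ($\Leftarrow$), write $M=L^A$ and start from two structural observations: the covers of any element of a poset form an antichain, and, because $A$ is the set of covers of the atom $n$, every element strictly above $n$ lies above some member of $A$. Hence ${\uparrow}_M A={\uparrow}_M n\setminus\{n\}\subseteq L$, and this set coincides with the up-set ${\uparrow}_L A$ computed inside $L$. The crux is then a comparison of meets in $M$ and in $L$: for $x,y\in L$ the lower bounds in $M$ are those in $L$ together with $n$ precisely when $n\leq x$ and $n\leq y$. Taking $x,y\in{\uparrow}A$ forces $n$ to be a common lower bound, so $x\wedge_M y\geq n$; analyzing whether $x\wedge_M y$ equals $n$ or lies in $L$ yields respectively $x\wedge_L y=0$ or $x\wedge_L y\in{\uparrow}A$, which is exactly the lattice-antichain condition.

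The main obstacle I anticipate is the bookkeeping around the distinction between meets taken in $L$ and in $M$: since $L$ is only a subposet, and not a priori a sublattice, of $M$, one must argue carefully that the two meets agree except in the single degenerate situation where the $L$-meet collapses to $0$ while $n$ is a common lower bound, in which case the $M$-meet rises to $n$. Isolating this one exceptional case, and checking that it is governed exactly by membership of $x\wedge_L y$ in $\{0\}\cup{\uparrow}A$, is the heart of both implications.
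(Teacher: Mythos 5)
The paper does not actually prove this lemma (it is quoted from Heitzig and Reinhold \cite{HR02} without proof), but your argument is correct and is the standard one: reduce lattice-hood to the existence of binary meets via the top element, and observe that adjoining $n$ perturbs the meet of $x,y\in{\uparrow}A$ only in the single case $x\wedge_L y=0$, which is exactly what the lattice-antichain condition governs. The one detail worth adding in the converse direction is that $A$ is automatically nonempty (the definition of lattice-antichain requires a \emph{nonempty} antichain); this holds because the atom $n$ is distinct from the top of $L^A$ and hence is covered by at least one element.
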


In order to generate only one copy of each lattice up to isomorphism, the
weight $w(L)=(w_2(L),\ldots,w_{n-1}(L))$ of an $n$-lattice $L$ is
defined by setting $w_i(L)=\displaystyle\sum_{i\prec j} 2^{j}$.

With this weight, for two $n$-lattices $L$ and $M$, $w(L)$ is said to be
(lexicographically) smaller than $w(M)$ if there is an $i\leq n-1$
such that $w_i(L)<w_i(M)$ and $w_k(L)=w_k(M)$ for all
$k=2,\ldots,i-1$.  An $n$-lattice C is called a canonical lattice if
there is no $n$-lattice isomorphic to $C$ that has a smaller weight.
In order to check whether an $n$-lattice $L$ is canonical, one has to
check whether there is a permutation of the elements of $L$ that
yields an isomorphic copy of $L$ with a smaller weight.

With these definitions, a recursive algorithm is formulated in
\cite{HR02} which generates exactly all canonical lattices of order
less or equal to $n$ for a given natural number $n\geq 2$.

\begin{quote}
\texttt{
\!\!\!next\_lattice(integer $m$, canonical $m$-lattice $L$)\\
begin\\
\hspace*{.3in}if $m<n$ then\\
\hspace*{.6in}for each lattice-antichain $A$ of $L$ do\\
\hspace*{.9in}if $L^{A}$ is a canonical lattice then\\
\hspace*{1.2in}next\_lattice($m+1$, $L^{A}$)\\
\hspace*{.3in}if $m=n$ then output $L$\\
end}
\end{quote}
\begin{center}
Algorithm 1
\end{center}

The set of all maximal elements in a finite poset $P$ is called the
\emph{first level} of $P$ and is denoted by $lev_1(P)$. The
\emph{$(m+1)$-th level} of $P$ is recursively defined by
\begin{align*}lev_{m+1}(P)=lev_1(P\setminus \bigcup_{i=1}^m lev_i(P)).\end{align*}

Following \cite{HR02} we define $dep_P(p)$ to be the number $k$ such
that $p\in lev_k(P)$. Although Heitzig and Reinhold refer to this as
the depth of $p$, it is more traditional to consider the depth of $p$
to be given by $dep_P(p)-1$. To avoid confusion, we only use the
function $dep_P$ rather than the notion of depth.

We say an $n$-lattice $L$ is \emph{levelized} if 
\begin{align*}dep_L(i)\leq dep_L(j) \text{ for all } i,j\in L\setminus\{0\} \text{ with } i\leq j. \end{align*}
In other words, the levels form a partition on $L \setminus
\{0\}=\{1,2,\ldots,n-1\}$ of the form $\{1\mid2,\ldots,m_2\mid
m_2+1,\ldots,m_3\mid\ldots\mid m_{k-1}+1,\ldots,m_k\}$, where $k$ is
the number of levels in $L$.

Throughout the rest of the paper, we consider the bottom level to be
$lev_k(L)$, unless indicated otherwise. The following lemma gives us
an important property when generating levelized lattices, since it
tells us we only need to consider lattice-antichains that have at least
one element in the two bottom levels.

\begin{lemma}[\cite{HR02}]\label{L2.2}
For a levelized $n$-lattice $L$ and a lattice-antichain $A$, $L^A$ is levelized
if and only if $A\cap(lev_{k-1}(L)\cup lev_k(L))\neq \emptyset$.
\end{lemma}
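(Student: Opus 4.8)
The plan is to reduce the levelized condition for $L^A$ to a single numerical inequality about the new element $n$, using the recursive description of $dep$. The starting point is the observation that for any finite poset $P$ one has $dep_P(p)=1+\max\{dep_P(q): q\text{ covers }p\}$, with $dep_P(p)=1$ for maximal $p$; this follows directly from the definition of the levels as successively peeled sets of maximal elements. First I would check that passing from $L$ to $L^A$ leaves the level of every old element unchanged. Since $n$ is adjoined as an atom whose only lower cover is $0$, it lies above no element of $L\setminus\{0\}$; hence for each $p\in L\setminus\{0\}$ no upper cover of $p$ is added (one would need $p<n$) and none is removed (a cover $p\lessdot q$ is split only if $p<n<q$, forcing $p=0$). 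Thus the set of covers of $p$ is identical in $L$ and in $L^A$, and a top-down induction on $dep$ via the recursive formula yields $dep_{L^A}(p)=dep_L(p)$ for every $p\in L\setminus\{0\}$.

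Second, I would compute the level of $n$ itself. By hypothesis the covers of $n$ in $L^A$ are exactly the elements of $A$, so the recursive formula gives $dep_{L^A}(n)=1+\max_{a\in A}dep_{L^A}(a)=1+\max_{a\in A}dep_L(a)$, the last equality coming from the first step since $A\subseteq L\setminus\{0\}$. Because the nonzero elements of $L^A$ are labelled $\{1,\dots,n\}$ with $n$ the largest label, and because $L$ is levelized (so $dep_L(i)\leq dep_L(j)$ whenever $i\leq j$ as integers), the function $dep_{L^A}$ already increases weakly along $\{1,\dots,n-1\}$. The only fresh requirement imposed by levelization of $L^A$ is therefore $dep_{L^A}(i)\leq dep_{L^A}(n)$ for all $i\leq n-1$, which is equivalent to $dep_{L^A}(n)\geq\max_{1\leq i\leq n-1}dep_L(i)=k$, using that the bottom level $lev_k(L)$ is nonempty while no nonzero element of $L$ has $dep$ exceeding $k$.

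Combining the two steps, $L^A$ is levelized if and only if $1+\max_{a\in A}dep_L(a)\geq k$, i.e.\ $\max_{a\in A}dep_L(a)\geq k-1$. Since a nonzero element has $dep$ at least $k-1$ precisely when it lies in $lev_{k-1}(L)$ or $lev_k(L)$, this is exactly $A\cap(lev_{k-1}(L)\cup lev_k(L))\neq\emptyset$, which is the claim. I expect the main obstacle to be the first step, where one must argue carefully that inserting $n$ below the antichain $A$ perturbs neither the covering relations above any old nonzero element nor, consequently, the peeling order that defines the levels. The delicate point is that although $n$ does sit strictly between $0$ and any atom of $L$ belonging to $A$, it never lies above a nonzero element; thus the levels of all elements of $L\setminus\{0\}$—the only ones relevant to the levelized condition—are preserved, and the entire effect of the construction is concentrated in the level of $n$.
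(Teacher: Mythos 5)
Your argument is correct. Note that the paper itself states this lemma without proof, importing it from \cite{HR02}, so there is no in-paper proof to compare against; your reduction — showing first that $dep_{L^A}(p)=dep_L(p)$ for all $p\in L\setminus\{0\}$ because the upper covers of nonzero elements are untouched by adjoining the atom $n$, and then that the only new constraint is $dep_{L^A}(n)=1+\max_{a\in A}dep_L(a)\geq k$ — is exactly the natural argument, and each step (including the cover-preservation claim and the final equivalence with $A\cap(lev_{k-1}(L)\cup lev_k(L))\neq\emptyset$) checks out.
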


   \section{Generating finite modular lattices}
In order to construct only modular lattices of size $n$ using this
algorithm, we start by selecting only the lattices that are modular
when we get to size $n$. However, modular lattices constitute a very
small fraction of the total number of lattices.  Therefore, it is
important to add constraints in order to minimize the generation of
non-modular lattices.  In order to do this, we present a series of
results to decide when a subtree in a search tree can be cut off, and
which lattice antichains must be considered in each step. During this
section, we refer to \emph{descendants} of a lattice $L$ as those
lattices generated through the element extension described in
\cite{HR02}, together with any additional constraints introduced in
this section.

\begin{lemma}\label{L3.1}
For an $n$-lattice $L$, and a lattice-antichain $A\subseteq L$,
if there exist $a,b\in A$ such that $dep_L(a)\neq dep_L(b)$, then all
descendants of $L^A$ are non-semimodular. Specifically, they are
non-modular.
\end{lemma}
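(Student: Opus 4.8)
The plan is to argue the contrapositive through the Jordan--H\"older Chain Condition (Theorem~\ref{JHCC}): in any descendant $M$ of $L^A$ I will exhibit two maximal chains of different lengths, which forces $M$ to be non-semimodular, and hence, by Proposition~\ref{mod-semimod} (since modularity implies semimodularity), non-modular. Throughout I will work with the two witnesses $a,b\in A$ given by the hypothesis, so that $dep_L(a)\neq dep_L(b)$.

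First I would pin down how descendants sit over $L^A$. In $L^A$ the element $n$ is an atom with $n\prec a$ and $n\prec b$, and every descendant is obtained by adjoining further atoms, each covering the bottom $0$. The crucial observation is that such a step never inserts anything strictly above $n$: a new atom $e$ satisfies $0\prec e$, so $n<e$ would place $n$ strictly between $0$ and $e$, contradicting $0\prec e$. Hence the principal filter of $n$ is left untouched as we pass to any descendant $M$; in particular the covers $n\prec_M a$ and $n\prec_M b$ survive, the top remains $1$ and the bottom remains $0$, and the upper intervals satisfy $[a,1]_M=[a,1]_L$ and $[b,1]_M=[b,1]_L$. I would state this ``nothing is added at or above $n$'' observation explicitly, since it underpins everything that follows.

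Next I would convert the depth hypothesis into chain lengths. From the definition of the levels, $dep_L(x)-1$ is the length of a longest saturated chain from $x$ up to the top $1$, and by the previous paragraph this quantity is the same when computed in $M$. I would fix one saturated chain $C_0$ from $0$ to $n$ in $M$, of length $\ell$, and then concatenate $C_0$, the cover $n\prec a$, and a longest (hence saturated) chain from $a$ to $1$ to obtain a maximal chain of $M$ of length $\ell+dep_L(a)$; the analogous chain built through $b$ has length $\ell+dep_L(b)$. Since $dep_L(a)\neq dep_L(b)$, these are two maximal chains of $M$ of distinct length, so Theorem~\ref{JHCC} shows $M$ is not semimodular, as required. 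The only delicate step, and the one I would take the most care over, is the structural claim of the second paragraph that forming descendants leaves the filter of $n$ intact; once that is isolated, the remainder is a direct appeal to Jordan--H\"older.
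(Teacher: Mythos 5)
Your proof is correct and takes essentially the same route as the paper's: both turn the depth hypothesis into longest chains from $a$ and $b$ up to $1$, prepend a chain from $0$ to $n$ through the covers $n\prec a$ and $n\prec b$, and invoke the Jordan--H\"older Chain Condition (Theorem~\ref{JHCC}) to conclude non-semimodularity of every descendant. Your explicit justification that passing to a descendant leaves the filter of $n$ (and hence these chains and their maximality) intact is a detail the paper leaves implicit, but it is the same argument.
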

\begin{proof}
Assume without loss of generality that $dep_L(a)<dep_L(b)$. Let $C_a$
and $C_b$ be the chains of maximal cardinality from 1 to $a$ and $b$
respectively.
	
For any $x\in L$, $dep_L(x)$ is equal to the cardinality of the
longest chain from $x$ to 1, hence $|C_a|<|C_b|$.
	
Next, in $L^A$, we have $n\prec a$ and $n\prec b$. Let $M$ be a
descendant of $L^A$, and choose any chain $D$ from 0 to $n$.
	
Then $D_a:=D\cup C_a$ and $D_b:=D\cup C_b$ are maximal chains of
different cardinality since $|D_a|=|D|+|C_a|<|D|+|C_b|=|D_b|$.  By
Theorem~\ref{JHCC}, it follows that $M$ is not semimodular, and
therefore is non-modular.
\end{proof}

From Lemmas \ref{L2.2} and \ref{L3.1} we may conclude the following result.

\begin{corollary}\label{levk}
For the construction of (semi-)modular lattices using the
orderly algorithm of \cite{HR02}, it is sufficient to consider
lattice-antichains $A$ such that $A\subseteq lev_{k-1}(L)$ or
$A\subseteq lev_k(L)$.
\end{corollary}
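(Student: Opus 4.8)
The plan is to combine the two previous results directly. The corollary claims that when generating (semi)modular lattices, it suffices to consider lattice-antichains $A$ that lie entirely within a single level, namely $A\subseteq lev_{k-1}(L)$ or $A\subseteq lev_k(L)$. I would argue that any lattice-antichain not of this form can be discarded because it leads only to non-modular descendants, so restricting attention to these two cases loses no modular lattice.

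First I would recall that by Lemma~\ref{L2.2}, in order for $L^A$ to remain levelized we must have $A\cap(lev_{k-1}(L)\cup lev_k(L))\neq\emptyset$, so at least one element of $A$ sits in one of the two bottom levels. Since we are only interested in generating levelized lattices (the canonical construction proceeds through levelized lattices), I may assume $A$ meets $lev_{k-1}(L)\cup lev_k(L)$. Next I would invoke Lemma~\ref{L3.1}: if $A$ contains two elements $a,b$ with $dep_L(a)\neq dep_L(b)$, then every descendant of $L^A$ is non-semimodular and hence non-modular. Consequently, for the purpose of building (semi)modular lattices, any such ``mixed-level'' antichain can be pruned from the search tree.

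It then remains to observe that the surviving antichains are exactly those whose elements all share a common depth, i.e. $A\subseteq lev_d(L)$ for a single level $d$. Combining this with the constraint from Lemma~\ref{L2.2} that $A$ must meet the bottom two levels forces $d\in\{k-1,k\}$, giving precisely $A\subseteq lev_{k-1}(L)$ or $A\subseteq lev_k(L)$. I would note that elements of $A$ in higher levels would either violate the single-level requirement (producing a non-modular descendant) or, if $A$ consisted solely of higher-level elements, violate Lemma~\ref{L2.2} and fail to be levelized.

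I do not expect any serious obstacle here, since the statement is an immediate synthesis of the two preceding results rather than an independent argument; the main subtlety is simply being careful about the logical direction. The lemmas tell us which antichains produce \emph{only} non-modular lattices, so the corollary is really the contrapositive bookkeeping: to capture all (semi)modular lattices we need only retain the antichains that are \emph{not} ruled out, and these are exactly the single-level antichains in $lev_{k-1}(L)$ or $lev_k(L)$. The one point worth stating explicitly is that discarding the pruned antichains is safe precisely because Lemma~\ref{L3.1} guarantees that none of their descendants could have been (semi)modular in the first place.
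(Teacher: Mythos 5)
Your argument is correct and is exactly the synthesis the paper intends: the corollary is stated with only the remark that it follows from Lemmas~\ref{L2.2} and~\ref{L3.1}, and your combination of the levelization constraint (forcing $A$ to meet the bottom two levels) with the pruning of mixed-depth antichains reproduces that reasoning faithfully, including the contrapositive bookkeeping. No gaps.
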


\begin{lemma}
Let $L$ be an $n$-lattice where $k=dep_L(n-1)$ is the bottom
nonzero level, and let $A\subseteq lev_k(L)$ be a lattice-antichain of
$L$.  If there is an atom of $L$ in $lev_{k-1}(L)$ then all descendants of
$L^A$ are non-semimodular, and hence non-modular.
\end{lemma}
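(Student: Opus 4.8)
The plan is to show that every descendant $M$ of $L^{A}$ contains two maximal chains of different cardinality; by the Jordan--H\"older chain condition (Theorem~\ref{JHCC}) this forces $M$ to be non-semimodular, and then Proposition~\ref{mod-semimod} (a modular lattice of finite length is semimodular) gives non-modularity. Let $p$ denote the atom of $L$ lying in $lev_{k-1}(L)$, so $dep_{L}(p)=k-1$; recall from the proof of Lemma~\ref{L3.1} that $dep_{L}(x)$ is the cardinality of a longest chain from $x$ to the top. The short maximal chain $0\prec p\prec\cdots\prec 1$, of cardinality $k$, is what I will play off against the longer chains forced through the atoms added at the bottom.

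First I would record the relevant depths in $L^{A}$ itself. Because $A\subseteq lev_{k}(L)$, the new atom $n$ has every cover at depth $k$, so $dep_{L^{A}}(n)=k+1$ and hence a longest chain from $0$ satisfies $dep_{L^{A}}(0)\ge k+2$. Since $p\in lev_{k-1}(L)$ while $A\subseteq lev_{k}(L)$, we have $p\notin A$, so nothing is added on or above $p$; thus $p$ is still an atom of $L^{A}$ and $dep_{L^{A}}(p)=k-1$.

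The heart of the argument is to propagate these two facts to an arbitrary descendant $M$, by induction along the extension steps $N\mapsto N^{A'}$. By Corollary~\ref{levk}, each admissible cover-antichain $A'$ is contained in one of the two lowest nonzero levels of the current lattice, and below $L^{A}$ these level indices are always at least $k$; in particular no admissible $A'$ meets $lev_{k-1}$, so $p\notin A'$. As $p$ is an atom by the inductive hypothesis, no element of $A'$ lies below $p$, so no newly added atom can become a lower cover of $p$; hence $p$ remains an atom and its up-set is unchanged, giving $dep_{M}(p)=k-1$. At the same time appending atoms at the bottom never shortens a chain from $0$, so $dep_{M}(0)\ge dep_{L^{A}}(0)\ge k+2$.

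Putting this together, in any descendant $M$ the chain $0\prec p\prec\cdots\prec 1$ is a maximal chain with $k$ elements, whereas a longest chain from $0$ to $1$ is a maximal chain with $dep_{M}(0)\ge k+2$ elements. Two maximal chains of different cardinality then yield non-semimodularity via Theorem~\ref{JHCC}, and non-modularity via Proposition~\ref{mod-semimod}. I expect the main obstacle to be exactly the inductive bookkeeping of the third paragraph: one must check that $p$ can neither lose atom status nor change level as the lattice is extended downward, and it is precisely here that the hypotheses $A\subseteq lev_{k}(L)$ and $p\in lev_{k-1}(L)$, together with the level restriction of Corollary~\ref{levk}, do the work.
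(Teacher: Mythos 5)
Your proof is correct and follows essentially the same strategy as the paper's: exhibit in every descendant a maximal chain of cardinality $k$ through the atom of $lev_{k-1}(L)$ (which, being in the third-lowest level of $L^A$, is protected by Corollary~\ref{levk} from ever joining a lattice-antichain and so stays an atom) and a maximal chain of cardinality at least $k+2$ through the new element $n$, then invoke Theorem~\ref{JHCC}. The only cosmetic difference is that you phrase the long chain via $dep_M(0)$ while the paper builds it explicitly as $C_n\cup C_a$; the content is identical.
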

\begin{proof}
Let $b\in lev_{k-1}(L)$ be an atom of $L$, and choose any
$a\in A\subseteq lev_{k}(L)$.  Then there exist chains
$C_a$ from $a$ to 1 and $C_b$ from $b$ to 1 of cardinality $k$ and $k-1$
respectively.
	
Since the new element $n$ is in a new level
$lev_{k+1}(L^A)$, and $dep_{L^A}(b)=k-1=(k+1)-2$, $b$ is contained in
the third lowest level of $L^A$. Therefore, by Corollary 3.2, it is not used
in the generation of any descendants, and $b$ remains as an atom in
all descendants.  Hence, the maximal chain $D_b:=\{0\}\cup C_b$ is of
constant cardinality $1+(k-1)=k$ for any descendant of $L^A$.
	
Let $M$ be a descendant of $L^A$. Choose any chain $C_n$ from 0 to
$n$, then $|C_n|\geq 2$. Therefore, for the maximal chain
$D_a:=C_n\cup C_a$,
	$$
		|D_a|=|C_n|+|C_a|\geq 2+k>k=|D_b|.
	$$ 
By Theorem \ref{JHCC}, since both $D_a$ and $D_b$ are maximal chains,
it follows that $M$ is non-semimodular and therefore non-modular.	
\end{proof}

An observation that significantly decreases the search space is based
on the following property of the algorithm: since elements are always
added below a lattice antichain, if two elements in the antichain fail
semimodularity, then those two elements also fail
semimodularity in any of the descendants.  Therefore, when adding a
new element below a lattice antichain, we should check that we are not
generating a non-semimodular lattice.

\begin{lemma}
For an $n$-lattice $L$ and a lattice antichain $A\subseteq L$, if
there exist $a,b\in A$ which do not have a common cover, then all
descendants of $L^A$ are non-semimodular.
\end{lemma}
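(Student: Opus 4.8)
The plan is to exhibit the specific pair $a,b$ as a witness to the failure of semimodularity, not only in $L^A$ but in every descendant $M$, by verifying that the two order-theoretic facts that force the failure are preserved when we pass from $L^A$ to any $M$: namely that the meet of $a$ and $b$ is a lower cover of both, and that $a$ and $b$ admit no common upper cover. Since semimodularity concludes $a,b\prec a\vee b$, the relevant notion of ``common cover'' here is a common upper cover, i.e.\ an element $d$ with $a\prec d$ and $b\prec d$.

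First I would settle the situation in $L^A$ itself. By construction $n$ is an atom whose set of covers is $A$, so $n\prec a$ and $n\prec b$. Because $A$ is an antichain, $a$ and $b$ are incomparable, and since $n\leq a$ and $n\leq b$ we get $n\leq a\wedge b\leq a$; as $n\prec a$ this forces $a\wedge b\in\{n,a\}$, and $a\wedge b=a$ is impossible (it would give $a\leq b$). Hence $a\wedge b=n$, so $a\wedge b\prec a,b$, while by hypothesis $a,b$ have no common upper cover, so $a\vee b$ cannot cover both. This already shows $L^A$ is non-semimodular, and is the base case for the induction.

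The heart of the argument is to propagate both properties to an arbitrary descendant $M$, using that every descendant is built from $L^A$ by repeatedly adjoining new elements as atoms below lattice-antichains. Thus $L^A$ is a subposet of $M$, and each adjoined element $n'$ satisfies $0\prec n'$ in the current lattice. From this I would deduce two preservation facts. (i) The covers $n\prec a$ and $n\prec b$ survive in $M$: a newly adjoined atom $n'$ cannot lie strictly between $n$ and $a$, for $n<n'$ together with $0<n$ would contradict $0\prec n'$; hence the interval from $n$ to $a$ gains no new element and $n\prec a$ persists, and likewise $n\prec b$. (ii) No new common upper cover of $a$ and $b$ is created: an adjoined atom $n'$ can neither cover $a$ nor be inserted between $a$ and one of its existing covers, again because $a<n'$ together with $0<a$ (recall $a\in A\subseteq L\setminus\{0\}$) would contradict $0\prec n'$; so the set of covers of $a$, and symmetrically of $b$, is the same in $M$ as in $L^A$ (and as in $L$), whence $a$ and $b$ still have no common cover. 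With these in hand, the same computation as before, using $n\leq a\wedge b\leq a$, $n\prec a$ in $M$, and $a\not\leq b$, yields that the meet of $a$ and $b$ in $M$ is again $n$, so $a\wedge b\prec a,b$ there; since $a,b$ have no common cover, $a\vee b$ fails to cover both, and semimodularity fails at the pair $a,b$. Hence $M$ is non-semimodular. I expect the main obstacle to be step (ii): carefully arguing that the descendant construction, which only ever adjoins minimal elements, can neither manufacture a common upper cover of $a$ and $b$ nor destroy the covers $n\prec a$ and $n\prec b$; once the subposet structure and the ``adjoined as an atom'' property are made explicit, the remaining reasoning is a short computation in the covering order.
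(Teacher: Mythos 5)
Your proof is correct and follows essentially the same route as the paper's: the new atom $n$ is covered by both $a$ and $b$, so $a\wedge b=n\prec a,b$, while no common upper cover of $a,b$ exists or can ever be created since descendants only adjoin new minimal elements. You simply spell out in more detail the two preservation facts (that $a\wedge b=n$ and the covers of $a,b$ persist in every descendant) that the paper's terser proof leaves implicit.
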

\begin{proof}
In $L^A$, for the new element $n$, $n\prec a$ and $n\prec b$. However,
$a\nprec a\vee b$ or $b\nprec a\vee b$. Therefore, $L^A$ is not
semimodular.  Furthermore, for any descendant $M$ of $L^A$, it is not
possible to add a common cover to $a,b$. Hence, $M$ is not semimodular
(and consequently, not modular).
\end{proof}

Similarly, we can consider when it is not possible to make a non-lower
semimodular lattice into a lower semimodular lattice.

\begin{lemma}
Let $L$ be an $n$-lattice, and let $k$ be its bottom non-zero
level. If there exist $a,b\in lev_{k-2}(L)$ which do not satisfy lower
semimodularity, then all descendants of $L$ are non-lower semimodular
(and hence non-modular).
\end{lemma}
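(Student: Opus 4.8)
The plan is to adapt the strategy of the preceding semimodularity lemma, using the fact that the element-extension construction inserts every new element below $lev_{k-1}(L)\cup lev_k(L)$ (Corollary~\ref{levk}). The dual consequence is that joins and the covering relations above $a$ and $b$ are frozen along any branch of descendants, while meets are the only quantities at risk of changing; so the substance of the argument is to control $a\wedge b$.

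First I would record what a genuine failure of lower semimodularity for the pair $a,b$ entails. The implication is vacuous unless its hypothesis holds, so failure means $a,b\prec a\vee b$ together with $a\wedge b\not\prec a$ or $a\wedge b\not\prec b$; assume the former. Since $a,b\in lev_{k-2}(L)$ are incomparable, $a\wedge b<a$ strictly, and $a\wedge b\not\prec a$ is witnessed by some $x$ with $a\wedge b<x<a$. Next I would check that the upper data survives in any descendant $M$: as no new element can be inserted at or above level $k-2$, the join $a\vee b$ is unchanged and nothing is squeezed between $a$ (or $b$) and $a\vee b$, so $a,b\prec a\vee b$ persists.

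The heart of the proof is showing $a\wedge_M b\not\prec a$. I would split on the value of the meet in $L$. If $a\wedge_L b\neq 0$, the lattice-antichain condition forces any newly added element below both $a$ and $b$ to lie strictly below $a\wedge_L b$, so the meet is preserved; the witness $x$ survives and blocks the covering. If $a\wedge_L b=0$, then $a$ is not an atom, so some $x$ with $0<x<a$ exists, and in $M$ the meet is either still $0$---in which case $x$ again blocks the covering---or it has migrated to a freshly created bottom element $n'$, whose covers all lie in the two bottom levels present when it was created and hence cannot include the high element $a$, giving $a\wedge_M b=n'\not\prec a$. In every case $a,b\prec a\vee b$ while $a\wedge_M b\not\prec a$, so $M$ is non-lower semimodular, and by Proposition~\ref{mod-semimod} non-modular.

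The main obstacle is exactly this last case $a\wedge_L b=0$. Unlike the join, the meet is not automatically frozen, so I must verify that when it jumps to a newly created atom, that atom sits too low to cover $a$; this is what the hypothesis $a,b\in lev_{k-2}(L)$ is designed to guarantee.
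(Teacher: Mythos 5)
Your proof is correct and follows the same underlying idea as the paper's: by Corollary~\ref{levk} every newly added element has all of its covers in the two bottom levels, so no descendant can ever supply a common co-cover for $a,b\in lev_{k-2}(L)$, while $a,b\prec a\vee b$ persists. In fact your write-up is more careful than the paper's one-line argument, since you explicitly verify the two points the paper leaves implicit --- that the join and the coverings above $a,b$ are frozen, and that even when $a\wedge b=0$ migrates to a freshly added atom, that atom's covers lie too low to cover $a$.
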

\begin{proof}
Given an $a,b$ such that $a,b\prec a\vee b$ but $a\wedge b\nprec a$ or
$a\wedge b \nprec b$, the algorithm can make $a,b$ satisfy lower
semimodularity by adding an element below $a,b$.  However, by
Corollary \ref{levk}, we only consider lattice antichains in
$lev_k(L)$ and $lev_{k-1}(L)$. Therefore, if $a,b\in lev_{k-2}(L)$, we
cannot add a common co-cover, and all descendants $M$ of $L$ are
non-lower semimodular.
\end{proof}

This lemma can be incorporated into the algorithm by checking that all
elements of $lev_{k-1}(L)$ satisfy lower semimodularity each time a new level
is added.

The preceding results are summarized in the following theorems.

\begin{theorem}
When generating semimodular lattices, for a lattice $L$, we only
consider lattice-antichains $A$ which satisfy all of the following
conditions:
\begin{itemize}
\item[(A1)] $A\subseteq lev_{k-1}(L)$ or $A\subseteq lev_k(L)$. 
\item[(A2)] If $A\subseteq lev_k(L)$, there are no atoms in $lev_{k-1}(L)$.
\item[(A3)] For all $x,y\in A$, $x$ and $y$ have a common cover.
\end{itemize}
\end{theorem}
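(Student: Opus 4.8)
The plan is to read the statement as the assertion that conditions (A1)--(A3) are \emph{necessary} for an antichain to lie on a construction path toward a semimodular $n$-lattice, so that pruning every antichain which violates one of them discards no semimodular lattice. Accordingly, I would prove the contrapositive for each condition separately: if $A$ fails (A1), (A2), or (A3), then no descendant of $L^A$ is semimodular. Each of these three implications has already been isolated as a lemma above, so the body of the argument is chiefly a matter of assembling them and then checking that nothing is lost in the pruning.

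First I would dispose of (A1). Since $L$ is being generated as a potential semimodular lattice, Theorem~\ref{JHCC} forces every semimodular target to be graded and hence levelized, so the orderly algorithm may be restricted to levelized lattices. By Lemma~\ref{L2.2}, keeping $L^A$ levelized requires $A\cap(lev_{k-1}(L)\cup lev_k(L))\neq\emptyset$, while Lemma~\ref{L3.1} shows that any $A$ containing two elements of distinct $dep_L$-value already dooms all descendants to non-semimodularity. Combining these two facts is exactly Corollary~\ref{levk}, which yields $A\subseteq lev_{k-1}(L)$ or $A\subseteq lev_k(L)$, i.e.\ (A1).

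Next I would treat (A2) and (A3) by invoking the two lemmas that immediately precede the statement. For (A2), the lemma on atoms shows that if $A\subseteq lev_k(L)$ and some atom survives in $lev_{k-1}(L)$, then by (A1) that atom is never extended below, so it remains an atom and produces a maximal chain strictly shorter than any chain through $n$; Theorem~\ref{JHCC} is then violated in every descendant. For (A3), the lemma on common covers shows that two elements $a,b\in A$ without a common cover give $n\prec a,b$ while $a\vee b$ fails to cover at least one of them, so semimodularity already fails at $L^A$ and, because extensions only add elements \emph{below} antichains, cannot be restored later.

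The only step requiring genuine care is the completeness direction rather than any single inequality: I must confirm that discarding these antichains never severs the construction path of an actual semimodular lattice. This is where the uniform phrasing of the lemmas---each concluding that \emph{all} descendants fail semimodularity---does the work, since it guarantees that a single violating extension is irreparable. Hence any sequence of extensions ending in a semimodular $n$-lattice uses only antichains satisfying (A1)--(A3), and restricting Algorithm~1 to such antichains produces exactly the same semimodular output as the unrestricted orderly algorithm. I expect no serious obstacle beyond making this last observation precise.
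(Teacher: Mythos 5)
Your proposal is correct and matches the paper's treatment: the paper offers no separate proof, stating the theorem explicitly as a summary of Corollary~\ref{levk} (for (A1)) and the two preceding lemmas on atoms in $lev_{k-1}(L)$ (for (A2)) and on common covers (for (A3)), which is precisely the assembly you carry out. Your added remark that each lemma's ``all descendants fail'' conclusion is what guarantees no semimodular lattice is lost by the pruning is a sound and welcome elaboration of what the paper leaves implicit.
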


\begin{theorem}
When generating modular lattices, for a lattice $L$, we only consider
lattice-antichains $A$ which satisfy \textup{(A1), (A2), (A3)} and
\begin{itemize}
\item[(A4)] If $A\subseteq lev_k(L)$, then $lev_{k-1}(L)$ satisfies lower
  semimodularity (i.~e., for all $x,y\in lev_{k-1}(L), \ x,y\prec
  x\vee y$ implies $x\wedge y\prec x,y$).
\end{itemize}
\end{theorem}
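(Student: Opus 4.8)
The plan is to derive this theorem as a direct synthesis of the preceding lemmas together with Proposition~\ref{mod-semimod}, which tells us that a lattice of finite length — in particular any finite lattice — is modular if and only if it is both semimodular and lower semimodular. Consequently, a branch of the search tree rooted at $L^A$ can be safely pruned whenever every descendant is shown to be \emph{either} non-semimodular \emph{or} non-lower semimodular, since in either case no descendant is modular. My task is therefore to check that each of (A1)--(A4), when violated, triggers exactly one of the pruning lemmas already established, so that imposing all four conditions discards no modular $n$-lattice.

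For (A1)--(A3) I would simply invoke the preceding (semimodular) theorem: Corollary~\ref{levk} forces (A1); the lemma on atoms in $lev_{k-1}(L)$ forces (A2); and the lemma on elements of $A$ lacking a common cover forces (A3). Each of these lemmas concludes that all descendants of $L^A$ are non-semimodular, hence by Proposition~\ref{mod-semimod} non-modular, so nothing modular is lost by requiring (A1)--(A3). The only new content is (A4), which supplies the lower-semimodular half of the modularity criterion.

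To justify (A4) I would track how the level indices shift under the extension. Suppose $A\subseteq lev_k(L)$, so that $L^A$ adjoins $n$ as a new atom forming a brand-new bottom level; since adjoining an element below $A$ does not alter any longest chain measured from the top, we have $lev_i(L^A)=lev_i(L)$ for $i\le k$ and $lev_{k+1}(L^A)=\{n\}$. Writing $k'=k+1$ for the bottom level of $L^A$, the set $lev_{k-1}(L)$ is therefore precisely $lev_{k'-2}(L^A)$. Applying the lower-semimodularity lemma to $L^A$ then shows that if two elements of $lev_{k-1}(L)$ fail lower semimodularity, no descendant of $L^A$ can supply them with a common co-cover (Corollary~\ref{levk} restricts all future antichains to the two bottom levels of $L^A$), so every descendant is non-lower semimodular and hence non-modular. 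This is exactly (A4), and it explains why the condition is imposed only in the case $A\subseteq lev_k(L)$: when instead $A\subseteq lev_{k-1}(L)$ no new bottom level is created, $lev_{k-1}(L)$ remains one level above the bottom, and its elements can still acquire co-covers later, so no analogous restriction is warranted.

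I expect the main obstacle to be precisely this level-index bookkeeping for (A4): one must verify carefully that the extension $A\subseteq lev_k(L)$ pushes the old level $lev_{k-1}(L)$ into the position $lev_{k'-2}(L^A)$ to which the lower-semimodularity lemma applies, rather than leaving it adjustable. Once the identification $lev_{k-1}(L)=lev_{(k+1)-2}(L^A)$ is confirmed, the result follows by assembling the four pruning conditions and appealing once more to Proposition~\ref{mod-semimod}.
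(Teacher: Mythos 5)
Your proposal is correct and matches the paper's intent: the paper offers no separate proof, stating only that the theorem ``summarizes the preceding results,'' and your synthesis of Corollary~\ref{levk}, the two non-semimodularity lemmas, the lower-semimodularity lemma, and Proposition~\ref{mod-semimod} is exactly the implicit argument. Your level-shift bookkeeping for (A4) (that $lev_{k-1}(L)$ becomes the third-lowest level of $L^A$ when $A\subseteq lev_k(L)$) is precisely what the paper compresses into the remark that the lower-semimodularity check should be performed ``each time a new level is added.''
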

	 
Another improvement can be implemented in the last step when
generating lattices of size $n$ from those of size $n-1$, by only
considering lattice-antichains $A\subseteq lev_{k-1}(L)$ and
$A=lev_k(L)$. This is due to the following result.

\begin{lemma}\label{bottomlev}
For an $n$-lattice $L$ and a lattice antichain $A\subsetneq lev_k(L)$,
the $n+1$-lattice $L^A$ is non-modular.
\end{lemma}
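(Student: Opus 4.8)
The plan is to produce two maximal chains from $0$ to $1$ in $L^A$ of different cardinalities and then invoke the Jordan--H\"older Chain Condition (Theorem~\ref{JHCC}) together with Proposition~\ref{mod-semimod}. The starting observation is that, because $A$ is a \emph{proper} subset of $lev_k(L)$, there is at least one element $c\in lev_k(L)\setminus A$, while $A$ itself is a nonempty antichain, so we may also fix some $a\in A$. Both $a$ and $c$ lie in the bottom nonzero level, hence are atoms of $L$. Since the new element $n$ is inserted only below the elements of $A$, nothing is added below $c$, so $c$ remains an atom of $L^A$ with $0\prec c$; moreover the order above any level-$k$ element is unchanged, so $dep_{L^A}(c)=dep_L(c)=k$, whereas $dep_{L^A}(n)=k+1$ by the level computation already used for the preceding lemmas.

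First I would build a maximal chain $C_1$ through $c$: take a longest chain from $c$ up to $1$, which has exactly $k$ elements because $dep_{L^A}(c)=k$, and prepend $0$. Since $0\prec c$ and a longest chain is automatically saturated, $C_1$ is an unrefineable chain from $0$ to $1$ with $|C_1|=k+1$. Next I would build a maximal chain $C_2$ through $n$ and $a$: starting from $0\prec n\prec a$ (the relation $n\prec a$ holds by the construction of $L^A$), adjoin a longest chain from $a$ to $1$, which has $dep_L(a)=k$ elements since $a\in lev_k(L)$. This yields a saturated chain from $0$ to $1$ with $|C_2|=k+2$.

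The two chains therefore satisfy $|C_1|=k+1\ne k+2=|C_2|$, so $L^A$ has maximal chains of different length. By Theorem~\ref{JHCC} this forces $L^A$ to be non-semimodular, and hence by Proposition~\ref{mod-semimod} it is non-modular. The step I expect to require the most care is verifying that the chain $C_1$ really is maximal and of length exactly $k+1$: one must confirm that $c$ stays an atom in $L^A$ and that its depth is unchanged, i.e.\ that inserting $n$ strictly below the level-$k$ elements of $A$ alters no upward chain from $c$ (here it is useful that distinct elements of the same level $lev_k(L)$ are incomparable, so $n\not\le c$). Once this is pinned down, the length bookkeeping and the appeal to the Jordan--H\"older condition are routine, and notably one does not need to exhibit an explicit pentagon sublattice, which would fail in general because $a\le c\vee n$ cannot be guaranteed.
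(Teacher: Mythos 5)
Your proof is correct and follows essentially the same route as the paper's: pick $a\in A$ and $c\in lev_k(L)\setminus A$, form the maximal chains $\{0,n\}\cup C_a$ and $\{0\}\cup C_c$ of cardinalities $k+2$ and $k+1$, and conclude via the Jordan--H\"older Chain Condition. Your extra care in checking that $c$ remains an atom of $L^A$ with unchanged depth is a detail the paper leaves implicit.
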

\begin{proof}
Since $A\subsetneq lev_k(L)$, there exists $b\in lev_k(L)$ such that
$b\not \in A$. Let $a\in A$.  Since $a,b\in lev_k(L)$, there exist
chains $C_a$ and $C_b$ from $a$ to 1 and $b$ to 1 respectively, both of
cardinality $k$.
	
In $L^A$, $n\prec a$, but $n\nprec b$. Thus, for the maximal chains
$D_a:=\{0,n\}\cup C_a$ and $D_b:=\{0\}\cup C_b$,
\begin{align*} |D_a|=2+k>1+k=|D_b|,\end{align*}
thus $L^A$ is non-modular.
\end{proof}

\section{Dealing with isomorphisms}
When generating finite (modular) lattices using Algorithm~1,
the majority of the time is spent in testing if the lattice $L^A$ is
canonical, an operation of order $O(n!)$. An approach that
speeds-up the algorithm significantly, while still generating
exactly one isomorphic copy of each (modular) lattice is via
generation by \emph{canonical construction path}, which was introduced by
McKay \cite{McK98}.

This canonical construction has two components. The first is to
use only one representative of each orbit in the lattice antichains of
$L$. In other words, if there is an automorphism $g$ on $L$ such that
$\{g(a)\mid a\in A\}=B$ for lattice-antichains $A,B$, only one of these
antichains is chosen arbitrarily.

The second is, after the extension of any lattice $L$ using $A$, $L^A$ is
checked to see if $L$ is the inverse through a ``canonical deletion''.
This uses the canonical labeling of the program \texttt{nauty}
\cite{MP13}.  
In general, a \emph{canonical labeling} associates with each $n$-lattice
$L$ a permutation $c_L$ on $\{0,\ldots,n-1\}$ such that for any $n$-lattice
$M$ we have $L\cong M$ if and only if 
\[
\{(c_L(x),c_L(y))\mid x\le y\text{ in }L\}=
\{(c_M(x),c_M(y))\mid x\le y\text{ in }M\},
\]
i.e., the permutation maps each lattice to a fixed representative of its 
isomorphism class.
When a new $(n+1)$-lattice $L^A$ is generated from $L$ and
$A$, a canonical labeling $c_{L^A}$ of $L^A$ is generated using a partition
by levels in \texttt{nauty}. Let $n':=c_{L^A}^{-1}(n)$ denote the element
which maps to $n$ under the canonical labeling. We consider the set
$m(L^A)=\{\langle L^A, a\rangle \mid f(a)=n', f\in Aut(L^A)\}$, where
$\langle L^A, a\rangle$ denotes the lattice obtained by removing $a$
from $L^A$. Note that $L=\langle L^A,n\rangle$. If $L\in m(L^A)$, we 
say $L^A$ is \emph{canonical} and keep it, otherwise it is discarded.

Using this construction, Theorem~1 in \cite{McK98} states that
starting from any lattice, exactly one isomorphic copy of each descendant 
will be output. Thus, starting with the two-element lattice, we can generate
exactly one isomorphic copy of each lattice of a given size $n$. This
has an advantage over the construction used in \cite{HR02} since it
does not require checking all permutations of a lattice, and it uses
canonical labeling by \texttt{nauty}, which is generally considered
the most efficient canonical labeling program for small combinatorial
structures. Furthermore, this construction is orderly since it only
considers the lattices $L$ and $L^A$. This is beneficial during
computations because it does not require storage of previously generated
lattices or communication between nodes during parallel computations.
Given this, Algorithm~1 can be modified:

\begin{quote}
\texttt{
\!\!\!next\_lattice2(integer $m$, canonical $m$-lattice $L$)\\
begin\\
\hspace*{.3in}if $m<n$ then\\
\hspace*{.6in}$LAC:=\{A \mid A$ is a lattice-antichain of $L$\}\\
\hspace*{.6in}for each orbit $O$ of the action of $Aut(L)$ on $LAC$\\
\hspace*{.9in}select any $A\in O$\\
\hspace*{.9in}$c:=$ canonical labeling of $L^A$\\
\hspace*{.9in}$n':=c^{-1}(n)$\\
\hspace*{.9in}if $f(n)=n'$ for some $f\in Aut(L^A)$ then\\
\hspace*{1.2in}next\_lattice2($m+1$, $L^{A}$)\\
\hspace*{.3in}if $m=n$ then output $L$\\
end}
\end{quote}
\begin{center}
Algorithm 2
\end{center}
 
 \section{Vertically indecomposable modular lattices}
We say a lattice $L$ is \emph{vertically decomposable} if it contains
an element which is neither the greatest nor the least element of $L$
but is comparable with every element of $L$.  A lattice which is not
vertically decomposable is said to be \emph{vertically
indecomposable}.

Let $m^v(n)$ be the number of unlabeled vertically indecomposable
modular lattices. Then the recursive formula \cite{HR02} can be used
to compute the number of unlabeled modular lattices from the number of
unlabeled vertically indecomposable modular lattices.
$$m(n)=\displaystyle\sum_{k=2}^n m^v(k)\cdot m(n-k+1), \quad  n\geq 2$$

In order to avoid generating vertically decomposable modular lattices,
we only need to avoid using $lev_k(L)$ as a lattice antichain of $L$,
since then $n\in L^A$ would be comparable to all elements in $L^A$.
However, not using these lattice antichains would cut off branches of
the canonical path that could potentially generate
vertically indecomposable canonical lattices. 
Lemma~\ref{vi} tells us we can safely avoid using them
when $|lev_k(L)|=1$.
	
\begin{lemma}\label{vi} 
Given an $n$-lattice $L$ with only one atom $n-1$, then all
descendants of $L^{\{n-1\}}$ are vertically decomposable.
\end{lemma}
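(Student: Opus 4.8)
The plan is to exhibit one element that is comparable to every element of every descendant; by definition this forces each descendant to be vertically decomposable. The natural candidate is the original sole atom $n-1$. First I would record why $L^{\{n-1\}}$ already has this property. Since $n-1$ is the only atom of $L$, every nonzero element $x$ of $L$ satisfies $x\geq n-1$, so $n-1$ sits below the whole filter ${\uparrow}(n-1)=L\setminus\{0\}$; and in $L^{\{n-1\}}$ the new atom $n$ satisfies $0\prec n\prec n-1$, so $n-1$ also sits above $\{0,n\}$. Thus $n-1$ is comparable to every element of $L^{\{n-1\}}$, and, provided $n-1$ is a proper atom (i.e.\ $n-1\neq 1$, which holds as soon as $L$ has at least three elements), it is neither the top nor the bottom.

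The core of the proof is an induction over the construction showing that $n-1$ keeps this property in every descendant. Suppose $M$ is a descendant in which $n-1$ is comparable to all elements, and let $M'=M^{A'}$ arise by adding a new atom $q$ below a lattice-antichain $A'$, so that $q\prec a$ for each $a\in A'$. Since the old comparabilities with $n-1$ are preserved, it suffices to show $q\leq n-1$. By the inductive hypothesis each $a\in A'$ is comparable to $n-1$, hence either $a\leq n-1$ or $a>n-1$, and because $A'$ is an antichain it cannot contain elements on both sides of $n-1$. So the only way the step could fail is that every element of $A'$ lies strictly above $n-1$, and this is what I would rule out with a depth count. Recall (as used in the proof of Lemma~\ref{L3.1}) that $dep_M(x)$ is the cardinality of a longest chain from $x$ to the top $1$. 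Chains from $n-1$ to $1$ lie entirely in ${\uparrow}(n-1)$, which is never enlarged when elements are appended at the bottom, so $dep_M(n-1)=dep_L(n-1)=:k_0$ throughout; the same remark gives $dep_M(u)\leq k_0-1$ for every $u>n-1$ (prepend $n-1$ to a longest chain from $u$), while the cover $n\prec n-1$ yields $dep_M(n)=k_0+1$, so $M$ has at least $k_0+1$ levels. By Corollary~\ref{levk}, $A'$ lies in the bottom two levels of $M$, i.e.\ every element of $A'$ has depth $\geq k_0$; since each $u>n-1$ has depth $\leq k_0-1$, no such $u$ can occur in $A'$. Therefore every $a\in A'$ satisfies $a\leq n-1$, and hence $q<a\leq n-1$, as required.

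This closes the induction, so $n-1$ is comparable to every element of each descendant of $L^{\{n-1\}}$ and each descendant is vertically decomposable. The step I expect to be the real obstacle is exactly the exclusion of antichains reaching above $n-1$: the comparability facts coming from the unique-atom hypothesis are immediate, but confining the admissible antichains to ${\downarrow}(n-1)$ relies on the two invariants $dep_M(n-1)=k_0$ and $dep_M(n)=k_0+1$ being maintained along the whole construction, which is what lets Corollary~\ref{levk} do its work.
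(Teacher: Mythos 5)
Your proof is correct and takes essentially the same route as the paper: the unique atom $n-1$ remains comparable to every element of every descendant because, by Corollary~\ref{levk}, new elements are only attached below the bottom two levels, which consist entirely of elements $\le n-1$. You spell out the induction and the depth bookkeeping (and the implicit requirement $n\ge 3$, i.e.\ $n-1\neq 1$) more explicitly than the paper's terse version, but the underlying argument is identical.
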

\begin{proof}
It is clear that $lev_k(L^A)=\{n\}$ and $lev_{k-1}(L^A)=\{n-1\}$,
where $n\leq n-1$. Under our construction, only lattice-antichains
$\{n\}$ and $\{n-1\}$ are considered.  Therefore, for any descendant
$M$ of $L^A$ and any new element $m\in M$ such that $m\not\in L^A$ or $m=0$,
$m\leq n-1$. Additionally, for all $a\in L\setminus\{0\}$, $n-1\leq a$. Thus, $M$ is
vertically decomposable.
\end{proof}

This means that we only construct vertically decomposable
lattices where the only comparable element is a single atom. However,
these are ignored during the count of vertically indecomposable
lattices.
	
Note that in the last step, by
Lemma~\ref{bottomlev}, we only have to consider
lattice-antichains in $lev_{k-1}(L)$.

\hspace{-30mm}
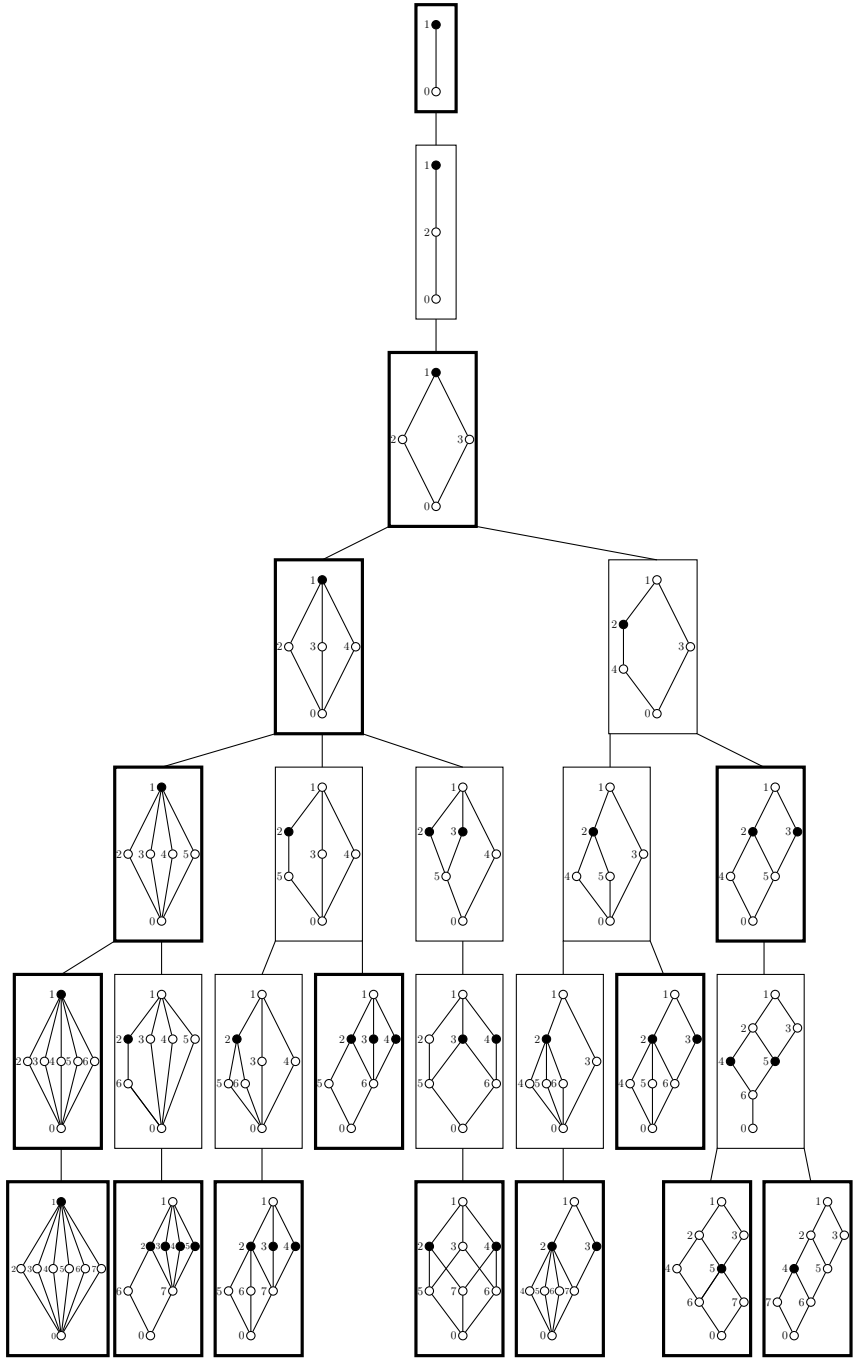
\begin{figure}
\begin{center}
\begin{tikzpicture}[scale=0.89, every node/.style={draw, circle, inner sep=2.8pt,scale=0.4}]\large
	\node (a0) at(0,0){};
	\node [fill=black](a1) at($(a0)+(0,1)$){};
	\node [scale=1,draw=none, circle=none, fill=none, left] at($(a1)$){1};
	\node [scale=1,draw=none, circle=none, fill=none, left] at($(a0)$){0};
	\draw(a0)edge(a1);
	\draw [very thick]($(a0)+(-0.3,-0.3)$) rectangle ($(a0)+(0.3,1.3)$);
	\draw [black] ($(a0)+(0,-0.3)$) -- +(0,-0.5);
	\node (b0) at($(a0)+(0,-3.1)$){};
	\node [fill=black](b1) at($(b0)+(0,2)$){};
	\node (b2) at($(b0)+(0,1)$){};
	\node [scale=1,draw=none, circle=none, fill=none, left] at($(b0)$){0};
	\node [scale=1,draw=none, circle=none, fill=none, left] at($(b1)$){1};
	\node [scale=1,draw=none, circle=none, fill=none, left] at($(b2)$){2};
	\draw(b0)edge(b2);
	\draw(b2)edge(b1);
	\draw ($(b0)+(-0.3,-0.3)$) rectangle ($(b0)+(0.3,2.3)$);
	\draw [black] ($(b0)+(0,-0.3)$) -- +(0,-0.5);
	\node (c0) at($(b0)+(0,-3.1)$){};
	\node [fill=black](c1) at($(c0)+(0,2)$){};
	\node (c2) at($(c0)+(-0.5,1)$){};
	\node (c3) at($(c0)+(0.5,1)$){};
	\node [scale=1,draw=none, circle=none, fill=none, left] at($(c0)$){0};
	\node [scale=1,draw=none, circle=none, fill=none, left] at($(c1)$){1};
	\node [scale=1,draw=none, circle=none, fill=none, left] at($(c2)$){2};
	\node [scale=1,draw=none, circle=none, fill=none, left] at($(c3)$){3};
	\draw(c0)edge(c2);
	\draw(c0)edge(c3);
	\draw(c3)edge(c1);
	\draw(c2)edge(c1);
	\draw [very thick]($(c0)+(-0.7,-0.3)$) rectangle ($(c0)+(0.6,2.3)$);
	\draw [black] ($(c0)+(-0.7,-0.3)$) -- +(-1,-0.5);
	\node (d0) at($(c0)+(-1.7,-3.1)$){};
	\node [fill=black](d1) at($(d0)+(0,2)$){};
	\node (d2) at($(d0)+(-0.5,1)$){};
	\node (d3) at($(d0)+(0,1)$){};
	\node (d4) at($(d0)+(0.5,1)$){};
	\node [scale=1,draw=none, circle=none, fill=none, left] at($(d0)$){0};
	\node [scale=1,draw=none, circle=none, fill=none, left] at($(d1)$){1};
	\node [scale=1,draw=none, circle=none, fill=none, left] at($(d2)$){2};
	\node [scale=1,draw=none, circle=none, fill=none, left] at($(d3)$){3};
	\node [scale=1,draw=none, circle=none, fill=none, left] at($(d4)$){4};
	\draw(d0)edge(d2)edge(d3)edge(d4);
	\draw(d2)edge(d1);
	\draw(d3)edge(d1);
	\draw(d4)edge(d1);
	\draw [very thick]($(d0)+(-0.7,-0.3)$) rectangle ($(d0)+(0.6,2.3)$); 
	\draw [black] ($(c0)+(0.6,-0.3)$) -- +(2.7,-0.5);
	\node (e0) at($(c0)+(3.3,-3.1)$){};
	\node (e1) at($(e0)+(0,2)$){};
	\node [fill=black](e2) at($(e0)+(-0.5,4/3)$){};
	\node (e3) at($(e0)+(0.5,1)$){};
	\node (e4) at($(e0)+(-0.5,2/3)$){};
	\node [scale=1,draw=none, circle=none, fill=none, left] at($(e0)$){0};
	\node [scale=1,draw=none, circle=none, fill=none, left] at($(e1)$){1};
	\node [scale=1,draw=none, circle=none, fill=none, left] at($(e2)$){2};
	\node [scale=1,draw=none, circle=none, fill=none, left] at($(e3)$){3};
	\node [scale=1,draw=none, circle=none, fill=none, left] at($(e4)$){4};
	\draw(e0)edge(e4)edge(e3);
	\draw(e3)edge(e1);
	\draw(e2)edge(e1)edge(e4);
	\draw ($(e0)+(-0.72,-0.3)$) rectangle ($(e0)+(0.6,2.3)$);

	\draw [black] ($(d0)+(-0.7,-0.3)$) -- +(-1.7,-0.5);
	\node (g0) at($(d0)+(-2.4,-3.1)$){};
	\node [fill=black](g1) at($(g0)+(0,2)$){};
	\node (g2) at($(g0)+(-0.5,1)$){};
	\node (g3) at($(g0)+(-0.5+1/3,1)$){};
	\node (g4) at($(g0)+(0.5-1/3,1)$){};
	\node (g5) at($(g0)+(0.5,1)$){};
	\node [scale=1,draw=none, circle=none, fill=none, left] at($(g0)$){0};
	\node [scale=1,draw=none, circle=none, fill=none, left] at($(g1)$){1};
	\node [scale=1,draw=none, circle=none, fill=none, left] at($(g2)$){2};
	\node [scale=1,draw=none, circle=none, fill=none, left] at($(g3)$){3};
	\node [scale=1,draw=none, circle=none, fill=none, left] at($(g4)$){4};
	\node [scale=1,draw=none, circle=none, fill=none, left] at($(g5)$){5};
	\draw(g0)edge(g2)edge(g3)edge(g4)edge(g5);
	\draw(g1)edge(g2)edge(g3)edge(g4)edge(g5);
	\draw [very thick]($(g0)+(-0.7,-0.3)$) rectangle ($(g0)+(0.6,2.3)$);
	\draw [black] ($(d0)+(0,-0.3)$) -- +(0,-0.5);
	\node (h0) at($(d0)+(0,-3.1)$){};
	\node (h1) at($(h0)+(0,2)$){};
	\node [fill=black](h2) at($(h0)+(-0.5,4/3)$){};
	\node (h3) at($(h0)+(0,1)$){};
	\node (h4) at($(h0)+(0.5,1)$){};
	\node (h5) at($(h0)+(-0.5,2/3)$){};
	\node [scale=1,draw=none, circle=none, fill=none, left] at($(h0)$){0};
	\node [scale=1,draw=none, circle=none, fill=none, left] at($(h1)$){1};
	\node [scale=1,draw=none, circle=none, fill=none, left] at($(h2)$){2};
	\node [scale=1,draw=none, circle=none, fill=none, left] at($(h3)$){3};
	\node [scale=1,draw=none, circle=none, fill=none, left] at($(h4)$){4};
	\node [scale=1,draw=none, circle=none, fill=none, left] at($(h5)$){5};
	\draw(h0)edge(h3)edge(h4)edge(h5);
	\draw(h1)edge(h2)edge(h3)edge(h4);
	\draw(h2)edge(h5);
	\draw ($(h0)+(-0.7,-0.3)$) rectangle ($(h0)+(0.6,2.3)$);
	\draw [black] ($(d0)+(0.6,-0.3)$) -- +(1.5,-0.5);		
	\node (i0) at($(d0)+(2.1,-3.1)$){};
	\node (i1) at($(i0)+(0,2)$){};
	\node [fill=black](i2) at($(i0)+(-0.5,4/3)$){};
	\node [fill=black](i3) at($(i0)+(0,4/3)$){};
	\node (i4) at($(i0)+(0.5,1)$){};
	\node (i5) at($(i0)+(-0.25,2/3)$){};
	\node [scale=1,draw=none, circle=none, fill=none, left] at($(i0)$){0};
	\node [scale=1,draw=none, circle=none, fill=none, left] at($(i1)$){1};
	\node [scale=1,draw=none, circle=none, fill=none, left] at($(i2)$){2};
	\node [scale=1,draw=none, circle=none, fill=none, left] at($(i3)$){3};
	\node [scale=1,draw=none, circle=none, fill=none, left] at($(i4)$){4};
	\node [scale=1,draw=none, circle=none, fill=none, left] at($(i5)$){5};
	\draw(i0)edge(i4)edge(i5);
	\draw(i1)edge(i2)edge(i3)edge(i4);
	\draw(i5)edge(i2)edge(i3);
	\draw ($(i0)+(-0.7,-0.3)$) rectangle ($(i0)+(0.6,2.3)$);
	\draw [black] ($(e0)+(0.6,-0.3)$) -- +(1,-0.5);
	\node (j0) at($(e0)+(1.6-1/6,-3.1)$){};
	\node (j1) at($(j0)+(1/3,2)$){};
	\node [fill=black](j2) at($(j0)+(0,4/3)$){};
	\node [fill=black](j3) at($(j0)+(2/3,4/3)$){};
	\node (j4) at($(j0)+(-1/3,2/3)$){};
	\node (j5) at($(j0)+(1/3,2/3)$){};
	\node [scale=1,draw=none, circle=none, fill=none, left] at($(j0)$){0};
	\node [scale=1,draw=none, circle=none, fill=none, left] at($(j1)$){1};
	\node [scale=1,draw=none, circle=none, fill=none, left] at($(j2)$){2};
	\node [scale=1,draw=none, circle=none, fill=none, left] at($(j3)$){3};
	\node [scale=1,draw=none, circle=none, fill=none, left] at($(j4)$){4};
	\node [scale=1,draw=none, circle=none, fill=none, left] at($(j5)$){5};
	\draw(j0)edge(j4)edge(j5);
	\draw(j1)edge(j2)edge(j3);
	\draw(j5)edge(j2)edge(j3);
	\draw(j4)edge(j2);
	\draw [very thick]($(j0)+(-0.7+1/6,-0.3)$) rectangle ($(j0)+(0.6+1/6,2.3)$);
	\draw [black] ($(e0)+(-0.7,-0.3)$) -- +(0,-0.5);
	\node (k0) at($(e0)+(-0.7,-3.1)$){};
	\node (k1) at($(k0)+(0,2)$){};
	\node [fill=black](k2) at($(k0)+(-0.25,4/3)$){};
	\node (k3) at($(k0)+(0.5,1)$){};
	\node (k5) at($(k0)+(0,2/3)$){};
	\node (k4) at($(k0)+(-0.5,2/3)$){};
	\node [scale=1,draw=none, circle=none, fill=none, left] at($(k0)$){0};
	\node [scale=1,draw=none, circle=none, fill=none, left] at($(k1)$){1};
	\node [scale=1,draw=none, circle=none, fill=none, left] at($(k2)$){2};
	\node [scale=1,draw=none, circle=none, fill=none, left] at($(k3)$){3};
	\node [scale=1,draw=none, circle=none, fill=none, left] at($(k4)$){4};
	\node [scale=1,draw=none, circle=none, fill=none, left] at($(k5)$){5};
	\draw(k0)edge(k4)edge(k5)edge(k3);
	\draw(k1)edge(k2)edge(k3);
	\draw(k2)edge(k4)edge(k5);
	\draw ($(k0)+(-0.7,-0.3)$) rectangle ($(k0)+(0.6,2.3)$);
	\draw [black] ($(g0)+(-0.7,-0.3)$) -- +(-0.8,-0.5);
	\node (l0) at($(g0)+(-1.5,-3.1)$){};
	\node [fill=black](l1) at($(l0)+(0,2)$){};
	\node (l2) at($(l0)+(-0.5,1)$){};
	\node (l3) at($(l0)+(-0.25,1)$){};
	\node (l4) at($(l0)+(0,1)$){};
	\node (l5) at($(l0)+(0.25,1)$){};
	\node (l6) at($(l0)+(0.5,1)$){};
	\node [scale=1,draw=none, circle=none, fill=none, left] at($(l0)$){0};
	\node [scale=1,draw=none, circle=none, fill=none, left] at($(l1)$){1};
	\node [scale=1,draw=none, circle=none, fill=none, left] at($(l2)$){2};
	\node [scale=1,draw=none, circle=none, fill=none, left] at($(l3)$){3};
	\node [scale=1,draw=none, circle=none, fill=none, left] at($(l4)$){4};
	\node [scale=1,draw=none, circle=none, fill=none, left] at($(l5)$){5};
	\node [scale=1,draw=none, circle=none, fill=none, left] at($(l6)$){6};
	\draw(l0)edge(l2)edge(l3)edge(l4)edge(l5)edge(l6);
	\draw(l1)edge(l2)edge(l3)edge(l4)edge(l5)edge(l6);
	\draw [very thick]($(l0)+(-0.7,-0.3)$) rectangle ($(l0)+(0.6,2.3)$);
	\draw [black] ($(g0)+(0,-0.3)$) -- +(0,-0.5);
	\node (m0) at($(g0)+(0,-3.1)$){};
	\node (m1) at($(m0)+(0,2)$){};
	\node [fill=black](m2) at($(m0)+(-0.5,4/3)$){};
	\node (m3) at($(m0)+(-0.5+1/3,4/3)$){};
	\node (m4) at($(m0)+(0.5-1/3,4/3)$){};
	\node (m5) at($(m0)+(0.5,4/3)$){};
	\node (m6) at($(m0)+(-0.5,2/3)$){};
	\node [scale=1,draw=none, circle=none, fill=none, left] at($(m0)$){0};
	\node [scale=1,draw=none, circle=none, fill=none, left] at($(m1)$){1};
	\node [scale=1,draw=none, circle=none, fill=none, left] at($(m2)$){2};
	\node [scale=1,draw=none, circle=none, fill=none, left] at($(m3)$){3};
	\node [scale=1,draw=none, circle=none, fill=none, left] at($(m4)$){4};
	\node [scale=1,draw=none, circle=none, fill=none, left] at($(m5)$){5};
	\node [scale=1,draw=none, circle=none, fill=none, left] at($(m6)$){6};
	\draw(m0)edge(m6)edge(m3)edge(m4)edge(m5)edge(m6);
	\draw(m1)edge(m2)edge(m3)edge(m4)edge(m5);
	\draw(m2)edge(m6);
	\draw ($(m0)+(-0.7,-0.3)$) rectangle ($(m0)+(0.6,2.3)$);
	\draw [black] ($(h0)+(-0.7,-0.3)$) -- +(-0.2,-0.5);		
	\node (n0) at($(h0)+(-0.9,-3.1)$){};
	\node (n1) at($(n0)+(0,2)$){};
	\node [fill=black](n2) at($(n0)+(-0.375,4/3)$){};
	\node (n3) at($(n0)+(0,1)$){};
	\node (n4) at($(n0)+(0.5,1)$){};
	\node (n5) at($(n0)+(-0.5,2/3)$){};
	\node (n6) at($(n0)+(-0.25,2/3)$){};
	\node [scale=1,draw=none, circle=none, fill=none, left] at($(n0)$){0};
	\node [scale=1,draw=none, circle=none, fill=none, left] at($(n1)$){1};
	\node [scale=1,draw=none, circle=none, fill=none, left] at($(n2)$){2};
	\node [scale=1,draw=none, circle=none, fill=none, left] at($(n3)$){3};
	\node [scale=1,draw=none, circle=none, fill=none, left] at($(n4)$){4};
	\node [scale=1,draw=none, circle=none, fill=none, left] at($(n5)$){5};
	\node [scale=1,draw=none, circle=none, fill=none, left] at($(n6)$){6};
	\draw(n0)edge(n3)edge(n4)edge(n5)edge(n6);
	\draw(n1)edge(n2)edge(n3)edge(n4);
	\draw(n2)edge(n5)edge(n6);
	\draw ($(n0)+(-0.7,-0.3)$) rectangle ($(n0)+(0.6,2.3)$);
	\draw [black] ($(h0)+(0.6,-0.3)$) -- +(0,-0.5);
	\node (o0) at($(h0)+(0.6-1/6,-3.1)$){};
	\node (o1) at($(o0)+(1/3,2)$){};
	\node [fill=black](o2) at($(o0)+(0,4/3)$){};
	\node [fill=black](o3) at($(o0)+(1/3,4/3)$){};
	\node [fill=black](o4) at($(o0)+(2/3,4/3)$){};
	\node (o5) at($(o0)+(-1/3,2/3)$){};
	\node (o6) at($(o0)+(1/3,2/3)$){};
	\node [scale=1,draw=none, circle=none, fill=none, left] at($(o0)$){0};
	\node [scale=1,draw=none, circle=none, fill=none, left] at($(o1)$){1};
	\node [scale=1,draw=none, circle=none, fill=none, left] at($(o2)$){2};
	\node [scale=1,draw=none, circle=none, fill=none, left] at($(o3)$){3};
	\node [scale=1,draw=none, circle=none, fill=none, left] at($(o4)$){4};
	\node [scale=1,draw=none, circle=none, fill=none, left] at($(o5)$){5};
	\node [scale=1,draw=none, circle=none, fill=none, left] at($(o6)$){6};
	\draw(o0)edge(o5)edge(o6);
	\draw(o1)edge(o2)edge(o3)edge(o4);
	\draw(o5)edge(o2);
	\draw(o6)edge(o2)edge(o3)edge(o4);
	\draw [very thick]($(o0)+(-0.7+1/6,-0.3)$) rectangle ($(o0)+(0.6+1/6,2.3)$);
	\draw [black] ($(i0)+(0,-0.3)$) -- +(0,-0.5);
	\node (p0) at($(i0)+(0,-3.1)$){};
	\node (p1) at($(p0)+(0,2)$){};
	\node (p2) at($(p0)+(-0.5,4/3)$){};
	\node [fill=black](p3) at($(p0)+(0,4/3)$){};
	\node [fill=black](p4) at($(p0)+(0.5,4/3)$){};
	\node (p5) at($(p0)+(-0.5,2/3)$){};
	\node (p6) at($(p0)+(0.5,2/3)$){};
	\node [scale=1,draw=none, circle=none, fill=none, left] at($(p0)$){0};
	\node [scale=1,draw=none, circle=none, fill=none, left] at($(p1)$){1};
	\node [scale=1,draw=none, circle=none, fill=none, left] at($(p2)$){2};
	\node [scale=1,draw=none, circle=none, fill=none, left] at($(p3)$){3};
	\node [scale=1,draw=none, circle=none, fill=none, left] at($(p4)$){4};
	\node [scale=1,draw=none, circle=none, fill=none, left] at($(p5)$){5};
	\node [scale=1,draw=none, circle=none, fill=none, left] at($(p6)$){6};
	\draw(p0)edge(p5)edge(p6);
	\draw(p1)edge(p2)edge(p3)edge(p4);
	\draw(p2)edge(p5);
	\draw(p4)edge(p6);
	\draw(p3)edge(p5)edge(p6);
	\draw ($(p0)+(-0.7,-0.3)$) rectangle ($(p0)+(0.6,2.3)$);
	\draw [black] ($(j0)+(1/6,-0.3)$) -- +(0,-0.5);
	\node (q0) at($(j0)+(0,-3.1)$){};
	\node (q1) at($(q0)+(1/3,2)$){};
	\node (q2) at($(q0)+(0,1.5)$){};
	\node (q3) at($(q0)+(2/3,1.5)$){};
	\node [fill=black](q4) at($(q0)+(-1/3,1)$){};
	\node [fill=black](q5) at($(q0)+(1/3,1)$){};
	\node (q6) at($(q0)+(0,0.5)$){};
	\node [scale=1,draw=none, circle=none, fill=none, left] at($(q0)$){0};
	\node [scale=1,draw=none, circle=none, fill=none, left] at($(q1)$){1};
	\node [scale=1,draw=none, circle=none, fill=none, left] at($(q2)$){2};
	\node [scale=1,draw=none, circle=none, fill=none, left] at($(q3)$){3};
	\node [scale=1,draw=none, circle=none, fill=none, left] at($(q4)$){4};
	\node [scale=1,draw=none, circle=none, fill=none, left] at($(q5)$){5};
	\node [scale=1,draw=none, circle=none, fill=none, left] at($(q6)$){6};
	\draw(q6)edge(q4)edge(q5)edge(q0);
	\draw(q1)edge(q2)edge(q3);
	\draw(q5)edge(q2)edge(q3);
	\draw(q4)edge(q2);
	\draw ($(q0)+(-0.7+1/6,-0.3)$) rectangle ($(q0)+(0.6+1/6,2.3)$);
	\draw [black] ($(k0)+(-0.7,-0.3)$) -- +(0,-0.5);
	\node (r0) at($(k0)+(-0.7,-3.1)$){};
	\node (r1) at($(r0)+(0,2)$){};
	\node [fill=black](r2) at($(r0)+(-0.25,4/3)$){};
	\node (r3) at($(r0)+(0.5,1)$){};
	\node (r6) at($(r0)+(0,2/3)$){};
	\node (r5) at($(r0)+(-0.25,2/3)$){};
	\node (r4) at($(r0)+(-0.5,2/3)$){};
	\node [scale=1,draw=none, circle=none, fill=none, left] at($(r0)$){0};
	\node [scale=1,draw=none, circle=none, fill=none, left] at($(r1)$){1};
	\node [scale=1,draw=none, circle=none, fill=none, left] at($(r2)$){2};
	\node [scale=1,draw=none, circle=none, fill=none, left] at($(r3)$){3};
	\node [scale=1,draw=none, circle=none, fill=none, left] at($(r4)$){4};
	\node [scale=1,draw=none, circle=none, fill=none, left] at($(r5)$){5};
	\node [scale=1,draw=none, circle=none, fill=none, left] at($(r6)$){6};
	\draw(r0)edge(r4)edge(r5)edge(r3)edge(r6);
	\draw(r1)edge(r2)edge(r3);
	\draw(r2)edge(r4)edge(r5)edge(r6);
	\draw ($(r0)+(-0.7,-0.3)$) rectangle ($(r0)+(0.6,2.3)$);
	\draw [black] ($(k0)+(0.6,-0.3)$) -- +(0.2,-0.5);		
	\node (s0) at($(k0)+(0.8-1/6,-3.1)$){};
	\node (s1) at($(s0)+(1/3,2)$){};
	\node [fill=black](s2) at($(s0)+(0,4/3)$){};
	\node [fill=black](s3) at($(s0)+(2/3,4/3)$){};
	\node (s4) at($(s0)+(-1/3,2/3)$){};
	\node (s5) at($(s0)+(0,2/3)$){};
	\node (s6) at($(s0)+(1/3,2/3)$){};
	\node [scale=1,draw=none, circle=none, fill=none, left] at($(s0)$){0};
	\node [scale=1,draw=none, circle=none, fill=none, left] at($(s1)$){1};
	\node [scale=1,draw=none, circle=none, fill=none, left] at($(s2)$){2};
	\node [scale=1,draw=none, circle=none, fill=none, left] at($(s3)$){3};
	\node [scale=1,draw=none, circle=none, fill=none, left] at($(s4)$){4};
	\node [scale=1,draw=none, circle=none, fill=none, left] at($(s5)$){5};
	\node [scale=1,draw=none, circle=none, fill=none, left] at($(s6)$){6};
	\draw(s2)edge(s1)edge(s4)edge(s5)edge(s6);
	\draw(s0)edge(s4)edge(s5)edge(s6);
	\draw(s3)edge(s1)edge(s6);
	\draw [very thick]($(s0)+(-0.7+1/6,-0.3)$) rectangle ($(s0)+(0.6+1/6,2.3)$);
	
	\draw [black] ($(l0)+(0,-0.3)$) -- +(0,-0.5);
	\node (t0) at($(l0)+(0,-3.1)$){};
	\node [fill=black](t1) at($(t0)+(0,2)$){};
	\node (t2) at($(t0)+(-0.6,1)$){};
	\node (t3) at($(t0)+(-0.36,1)$){};
	\node (t4) at($(t0)+(-0.12,1)$){};
	\node (t5) at($(t0)+(0.12,1)$){};
	\node (t6) at($(t0)+(0.36,1)$){};
	\node (t7) at($(t0)+(0.6,1)$){};
	\node [scale=0.8,draw=none, circle=none, fill=none, left] at($(t0)$){0};
	\node [scale=0.8,draw=none, circle=none, fill=none, left] at($(t1)$){1};
	\node [scale=0.8,draw=none, circle=none, fill=none, left] at($(t2)$){2};
	\node [scale=0.8,draw=none, circle=none, fill=none, left] at($(t3)$){3};
	\node [scale=0.8,draw=none, circle=none, fill=none, left] at($(t4)$){4};
	\node [scale=0.8,draw=none, circle=none, fill=none, left] at($(t5)$){5};
	\node [scale=0.8,draw=none, circle=none, fill=none, left] at($(t6)$){6};
	\node [scale=0.8,draw=none, circle=none, fill=none, left] at($(t7)$){7};
	\draw(t0)edge(t2)edge(t3)edge(t4)edge(t5)edge(t6)edge(t7);
	\draw(t1)edge(t2)edge(t3)edge(t4)edge(t5)edge(t6)edge(t7);
	\draw [very thick]($(t0)+(-0.8,-0.3)$) rectangle ($(t0)+(0.7,2.3)$);
	\draw [black] ($(m0)+(0,-0.3)$) -- +(0,-0.5);
	\node (u0) at($(m0)+(-1/6,-3.1)$){};
	\node (u1) at($(u0)+(1/3,2)$){};
	\node [fill=black](u2) at($(u0)+(0,4/3)$){};
	\node [fill=black](u3) at($(u0)+(2/9,4/3)$){};
	\node [fill=black](u4) at($(u0)+(4/9,4/3)$){};
	\node [fill=black](u5) at($(u0)+(2/3,4/3)$){};
	\node (u6) at($(u0)+(-1/3,2/3)$){};
	\node (u7) at($(u0)+(1/3,2/3)$){};
	\node [scale=1,draw=none, circle=none, fill=none, left] at($(u0)$){0};
	\node [scale=1,draw=none, circle=none, fill=none, left] at($(u1)$){1};
	\node [scale=0.8,draw=none, circle=none, fill=none, left] at($(u2)$){2};
	\node [scale=0.8,draw=none, circle=none, fill=none, left] at($(u3)$){3};
	\node [scale=0.8,draw=none, circle=none, fill=none, left] at($(u4)$){4};
	\node [scale=0.8,draw=none, circle=none, fill=none, left] at($(u5)$){5};
	\node [scale=1,draw=none, circle=none, fill=none, left] at($(u6)$){6};
	\node [scale=1,draw=none, circle=none, fill=none, left] at($(u7)$){7};
	\draw(u0)edge(u6)edge(u7);
	\draw(u1)edge(u2)edge(u3)edge(u4)edge(u5);
	\draw(u7)edge(u2)edge(u3)edge(u4)edge(u5);
	\draw(u6)edge(u2);
	\draw [very thick]($(u0)+(-0.7+1/6,-0.3)$) rectangle ($(u0)+(0.6+1/6,2.3)$);
	\draw [black] ($(n0)+(0,-0.3)$) -- +(0,-0.5);
	\node (v0) at($(n0)+(-1/6,-3.1)$){};
	\node (v1) at($(v0)+(1/3,2)$){};
	\node [fill=black](v2) at($(v0)+(0,4/3)$){};
	\node [fill=black](v3) at($(v0)+(1/3,4/3)$){};
	\node [fill=black](v4) at($(v0)+(2/3,4/3)$){};
	\node (v5) at($(v0)+(-1/3,2/3)$){};
	\node (v6) at($(v0)+(0,2/3)$){};
	\node (v7) at($(v0)+(1/3,2/3)$){};
	\node [scale=1,draw=none, circle=none, fill=none, left] at($(v0)$){0};
	\node [scale=1,draw=none, circle=none, fill=none, left] at($(v1)$){1};
	\node [scale=1,draw=none, circle=none, fill=none, left] at($(v2)$){2};
	\node [scale=1,draw=none, circle=none, fill=none, left] at($(v3)$){3};
	\node [scale=1,draw=none, circle=none, fill=none, left] at($(v4)$){4};
	\node [scale=1,draw=none, circle=none, fill=none, left] at($(v5)$){5};
	\node [scale=1,draw=none, circle=none, fill=none, left] at($(v6)$){6};
	\node [scale=1,draw=none, circle=none, fill=none, left] at($(v7)$){7};
	\draw(v0)edge(v5)edge(v6)edge(v7);
	\draw(v1)edge(v2)edge(v3)edge(v4);
	\draw(v7)edge(v2)edge(v3)edge(v4);
	\draw(v2)edge(v5)edge(v6);
	\draw [very thick]($(v0)+(-0.7+1/6,-0.3)$) rectangle ($(v0)+(0.6+1/6,2.3)$);
	\draw [black] ($(p0)+(0,-0.3)$) -- +(0,-0.5);
	\node (w0) at($(p0)+(0,-3.1)$){};
	\node (w1) at($(w0)+(0,2)$){};
	\node [fill=black](w2) at($(w0)+(-0.5,4/3)$){};
	\node (w3) at($(w0)+(0,4/3)$){};
	\node [fill=black](w4) at($(w0)+(0.5,4/3)$){};
	\node (w5) at($(w0)+(-0.5,2/3)$){};
	\node (w6) at($(w0)+(0.5,2/3)$){};
	\node (w7) at($(w0)+(0,2/3)$){};
	\node [scale=1,draw=none, circle=none, fill=none, left] at($(w0)$){0};
	\node [scale=1,draw=none, circle=none, fill=none, left] at($(w1)$){1};
	\node [scale=1,draw=none, circle=none, fill=none, left] at($(w2)$){2};
	\node [scale=1,draw=none, circle=none, fill=none, left] at($(w3)$){3};
	\node [scale=1,draw=none, circle=none, fill=none, left] at($(w4)$){4};
	\node [scale=1,draw=none, circle=none, fill=none, left] at($(w5)$){5};
	\node [scale=1,draw=none, circle=none, fill=none, left] at($(w6)$){6};
	\node [scale=1,draw=none, circle=none, fill=none, left] at($(w7)$){7};
	\draw(w0)edge(w5)edge(w6);
	\draw(w1)edge(w2)edge(w3)edge(w4);
	\draw(w2)edge(w5);
	\draw(w4)edge(w6);
	\draw(w3)edge(w5)edge(w6);
	\draw(w7)edge(w0)edge(w2)edge(w4);
	\draw [very thick]($(w0)+(-0.7,-0.3)$) rectangle ($(w0)+(0.6,2.3)$);
	\draw [black] ($(q0)+(0.6+1/6,-0.3)$) -- +(0.1,-0.5);	
	\node (x0) at($(q0)+(0.7+1/6-0.25,-3.1)$){};
	\node (x1) at($(x0)+(0.5,2)$){};
	\node (x2) at($(x0)+(0.25,1.5)$){};
	\node (x3) at($(x0)+(0.75,1.5)$){};
	\node [fill=black](x4) at($(x0)+(0,1)$){};
	\node (x5) at($(x0)+(0.5,1)$){};
	\node (x6) at($(x0)+(0.25,0.5)$){};
	\node (x7) at($(x0)+(-0.25,0.5)$){};
	\node [scale=1,draw=none, circle=none, fill=none, left] at($(x0)$){0};
	\node [scale=1,draw=none, circle=none, fill=none, left] at($(x1)$){1};
	\node [scale=1,draw=none, circle=none, fill=none, left] at($(x2)$){2};
	\node [scale=1,draw=none, circle=none, fill=none, left] at($(x3)$){3};
	\node [scale=1,draw=none, circle=none, fill=none, left] at($(x4)$){4};
	\node [scale=1,draw=none, circle=none, fill=none, left] at($(x5)$){5};
	\node [scale=1,draw=none, circle=none, fill=none, left] at($(x6)$){6};
	\node [scale=1,draw=none, circle=none, fill=none, left] at($(x7)$){7};
	\draw(x6)edge(x4)edge(x5)edge(x0);
	\draw(x1)edge(x2)edge(x3);
	\draw(x5)edge(x2)edge(x3);
	\draw(x4)edge(x2);
	\draw(x7)edge(x0)edge(x4);
	\draw [very thick]($(x0)+(-0.7+0.25,-0.3)$) rectangle ($(x0)+(0.6+0.25,2.3)$);
	\draw [black] ($(q0)+(-0.7+1/6,-0.3)$) -- +(-0.1,-0.5);		
	\node (z0) at($(q0)+(-0.8+1/3,-3.1)$){};
	\node (z1) at($(z0)+(0,2)$){};
	\node (z2) at($(z0)+(-1/3,1.5)$){};
	\node (z3) at($(z0)+(1/3,1.5)$){};
	\node (z4) at($(z0)+(-2/3,1)$){};
	\node [fill=black](z5) at($(z0)+(0,1)$){};
	\node (z6) at($(z0)+(-1/3,0.5)$){};
	\node (z7) at($(z0)+(1/3,0.5)$){};
	\node [scale=1,draw=none, circle=none, fill=none, left] at($(z0)$){0};
	\node [scale=1,draw=none, circle=none, fill=none, left] at($(z1)$){1};
	\node [scale=1,draw=none, circle=none, fill=none, left] at($(z2)$){2};
	\node [scale=1,draw=none, circle=none, fill=none, left] at($(z3)$){3};
	\node [scale=1,draw=none, circle=none, fill=none, left] at($(z4)$){4};
	\node [scale=1,draw=none, circle=none, fill=none, left] at($(z5)$){5};
	\node [scale=1,draw=none, circle=none, fill=none, left] at($(z6)$){6};
	\node [scale=1,draw=none, circle=none, fill=none, left] at($(z7)$){7};
	\draw(z6)edge(z4)edge(z5)edge(z0);
	\draw(z1)edge(z2)edge(z3);
	\draw(z5)edge(z2)edge(z3)edge(z6)edge(z7);
	\draw(z4)edge(z2);
	\draw(z7)edge(z0);
	\draw [very thick]($(z0)+(-0.7-1/6,-0.3)$) rectangle ($(z0)+(0.6-1/6,2.3)$);
	\draw [black] ($(r0)+(0,-0.3)$) -- +(0,-0.5);
	\node (y0) at($(r0)+(-1/6,-3.1)$){};
	\node (y1) at($(y0)+(1/3,2)$){};
	\node [fill=black](y2) at($(y0)+(0,4/3)$){};
	\node [fill=black](y3) at($(y0)+(2/3,4/3)$){};
	\node (y4) at($(y0)+(-1/3,2/3)$){};
	\node (y5) at($(y0)+(-1/9,2/3)$){};
	\node (y6) at($(y0)+(1/9,2/3)$){};
	\node (y7) at($(y0)+(1/3,2/3)$){};
	\node [scale=1,draw=none, circle=none, fill=none, left] at($(y0)$){0};
	\node [scale=1,draw=none, circle=none, fill=none, left] at($(y1)$){1};
	\node [scale=1,draw=none, circle=none, fill=none, left] at($(y2)$){2};
	\node [scale=1,draw=none, circle=none, fill=none, left] at($(y3)$){3};
	\node [scale=0.8,draw=none, circle=none, fill=none, left] at($(y4)$){4};
	\node [scale=0.8,draw=none, circle=none, fill=none, left] at($(y5)$){5};
	\node [scale=0.8,draw=none, circle=none, fill=none, left] at($(y6)$){6};
	\node [scale=0.8,draw=none, circle=none, fill=none, left] at($(y7)$){7};
	\draw(y2)edge(y1)edge(y4)edge(y5)edge(y6)edge(y7);
	\draw(y0)edge(y4)edge(y5)edge(y6)edge(y7);
	\draw(y3)edge(y1)edge(y7);
	\draw [very thick]($(y0)+(-0.7+1/6,-0.3)$) rectangle ($(y0)+(0.6+1/6,2.3)$);
\end{tikzpicture}
\caption{Example of the generation of all vertically indecomposable modular lattices up to size $n=8$. Black circles indicate lattice antichain used in the previous step. 
Thick rectangles indicate vertically indecomposable modular lattices.}
\end{center}
\end{figure}

\begin{table}[h]\scriptsize
\begin{center}
\begin{tabular}{|r|r|r|r|r|r|}\hline
$n$ & All Lattices\quad\  & Semimodular & V.I. Semimod. & Modular & V.I. Modular\\[0.5ex]\hline
1 & 1 & 1 & 1 & 1 & 1 \\
2 & 1 & 1 & 1 & 1 & 1 \\
3 & 1 & 1 & 0 & 1 & 0 \\
4 & 2 & 2 & 1 & 2 & 1 \\
5 & 5 & 4 & 1 & 4 & 1 \\
6 & 15 & 8 & 2 & 8 & 2 \\
7 & 53 &\textbf{17}&\textbf{4}& 16 & \textbf{3}\\
8 & 222 &\textbf{38}&\textbf{9}& 34 & \textbf{7}\\
9 & 1\,078 &\textbf{88}&\textbf{21}& 72 & \textbf{12}\\
10 & 5\,994 &\textbf{212}&\textbf{53}& 157 & \textbf{28}\\
11 & 37\,622 &\textbf{530}&\textbf{139}& 343 & \textbf{54}\\
12 & 262\,776 &\textbf{1\,376}&\textbf{384}& 766 & \textbf{127}\\
13 & 2\,018\,305 &\textbf{3\,693}&\textbf{1\,088}& \textbf{1\,718} & \textbf{266}\\
14 & 16\,873\,364 &\textbf{10\,232}&\textbf{3\,186}& \textbf{3\,899} & \textbf{614}\\
15 & 152\,233\,518 &\textbf{29\,231}&\textbf{9\,596}& \textbf{8\,898} & \textbf{1\,356}\\
16 & 1\,471\,613\,387&\textbf{85\,906}&\textbf{29\,601}& \textbf{20\,475} & \textbf{3\,134} \\
17 & 15\,150\,569\,446&\textbf{259\,291}&\textbf{93\,462}& \textbf{47\,321} & \textbf{7\,091}\\
18 & 165\,269\,824\,761&\textbf{802\,308}&\textbf{301\,265}& \textbf{110\,024} & \textbf{16\,482}\\
19 & \textbf{1901910625578} &\textbf{2\,540\,635}&\textbf{990\,083}& \textbf{256\,791} & \textbf{37\,929} \\
20 & &\textbf{8\,220\,218}&\textbf{3\,312\,563}& \textbf{601\,991} & \textbf{88\,622}\\
21 & &\textbf{27\,134\,483}&\textbf{11\,270\,507}& \textbf{1\,415\,768} & \textbf{206\,295}\\
22 & &\textbf{91\,258\,141}&\textbf{38\,955\,164}& \textbf{3\,340\,847} & \textbf{484\,445}\\
23 & & & & \textbf{7\,904\,700} &\textbf{1\,136\,897}\\
24 & & & & \textbf{18\,752\,942} &\textbf{2\,682\,450}\\\hline
\end{tabular} 
\end{center}
\caption{Number of lattices and (vertically indecomposable = V.I.) (semi)modular lattices up to isomorphism. New numbers are in bold.}\label{table1}
\end{table}
 
\section{A lower bound on the number of modular lattices}
Let $m_n$ denote the number of modular lattices of size $n$ (up to isomorphism).
In this section we give a simple argument for a lower bound of this sequence.

\begin{theorem} For all $n$, \ $2^{n-3}\leq m_n$.
\end{theorem}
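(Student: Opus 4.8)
The plan is to prove the stronger recursive estimate $m_n \ge 2 m_{n-1}$ for every $n \ge 4$ and then induct. The base case is $m_3 = 1 = 2^{0}$, and $m_4 = 2 \ge 2m_3$; assuming $m_{n-1} \ge 2^{n-4}$, the recursion gives $m_n \ge 2 m_{n-1} \ge 2^{n-3}$, which is the theorem. So everything reduces to the combinatorial task of producing, from the $m_{n-1}$ modular lattices of size $n-1$, at least $2 m_{n-1}$ pairwise nonisomorphic modular lattices of size $n$. I would organize these $n$-lattices by their number of coatoms, which partitions them cleanly into disjoint families.

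The first half is routine. Writing $L \oplus \mathbf{2}$ for the lattice obtained by adjoining a new top above $1_L$, the assignment $L \mapsto L \oplus \mathbf{2}$ is a bijection between modular $(n-1)$-lattices and modular $n$-lattices possessing a \emph{unique coatom}: the inverse sends such a lattice with unique coatom $c$ to the ideal ${\downarrow} c$, a modular lattice of size $n-1$. Hence exactly $m_{n-1}$ modular $n$-lattices have a unique coatom. Dually, $L \mapsto \mathbf{2} \oplus L$ matches modular $(n-1)$-lattices with modular $n$-lattices having a unique atom; restricting this dual map to the $m_{n-1} - m_{n-2}$ modular $(n-1)$-lattices that already have at least two coatoms yields that many modular $n$-lattices with a unique atom and at least two coatoms. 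These two families are disjoint (one has a unique coatom, the other at least two), accounting for $2 m_{n-1} - m_{n-2}$ lattices.

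It remains to recover the deficit of $m_{n-2}$. The goal is an injection from the modular $(n-2)$-lattices into the modular $n$-lattices having \emph{at least two atoms and at least two coatoms}; any such lattice is automatically distinct from both previous families. Equivalently one wants $\nu_n \ge m_{n-2}$, where $\nu_n$ counts modular $n$-lattices with $\ge 2$ atoms and $\ge 2$ coatoms. (Observe that every vertically indecomposable lattice lies in this class, since a unique atom would be comparable to every element and hence force vertical decomposability.) The natural idea is a \emph{widening} operation: given a modular $(n-2)$-lattice $K$, add two elements by a suitable local modification near the bottom and near the top — for instance replacing an appropriate bottom prime interval and top prime interval by copies of $B_2$ — so as to create a second atom and a second coatom while keeping the total size equal to $n$.

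I expect this last step to be the main obstacle. Parallel or horizontal attachments are not available, because they destroy modularity as soon as a chain of length $\ge 3$ is present (they produce pentagons), so the widening must be carried out strictly at prime quotients; moreover not every $K$ admits the naive modification at a canonical spot (e.g. $M_3$ has no height-two element covering a single atom), so the construction likely requires a case analysis or a more robust gadget. The key things to verify are that the widened poset is again a lattice and is modular — checkable via Proposition~\ref{mod-semimod} together with the Jordan--H\"older condition (Theorem~\ref{JHCC}), by confirming that no maximal chains of unequal length are introduced — and that the operation can be made canonical enough to be injective and to land inside the two-atom, two-coatom class. Establishing this injection is what upgrades the easy bound $2m_{n-1} - m_{n-2}$ (which alone yields only linear growth) to the full doubling $m_n \ge 2m_{n-1}$.
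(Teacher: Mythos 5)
There is a genuine gap, and it sits exactly where you predicted. Your first two families are correct but carry no content beyond bookkeeping: a modular $n$-lattice has a unique coatom iff it is $L\oplus\mathbf{2}$ for a modular $(n-1)$-lattice $L$, and dually for atoms, so by inclusion--exclusion the number $\nu_n$ of modular $n$-lattices with at least two atoms and at least two coatoms is exactly $m_n-2m_{n-1}+m_{n-2}$. Hence the remaining claim $\nu_n\ge m_{n-2}$ is not a reduction of $m_n\ge 2m_{n-1}$ but a verbatim restatement of it: the entire theorem, in your strengthened form, lives in the injection you did not construct. That injection is genuinely delicate. From Table~\ref{table1}, $m_4=2$, $m_5=4$, $m_6=8$, $m_7=16$, so $m_n=2m_{n-1}$ holds with \emph{equality} for $n=5,6,7$, and your map from modular $(n-2)$-lattices into the two-atom/two-coatom class would have to be a bijection there. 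For $n=6$ it must send the four-element chain and $B_2$ onto precisely $M_4$ and the $2\times 3$ grid; these require visibly different modifications (two new elements that are simultaneously atoms and coatoms in one case, one new atom plus one new coatom in the other), so no uniform local gadget works, the ``replace two prime intervals by $B_2$'' recipe does not even have the right element count, and as you yourself note $M_3$ defeats the naive version. As written, the proof is incomplete.

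The paper avoids this entirely by proving much less than $m_n\ge 2m_{n-1}$: it grows an explicit binary tree of lattices from the three-element chain, where at each step one either ($\alpha$) inserts a new element between $0$ and all the atoms, creating a unique atom, or ($\beta$) adjoins a new atom below a cover of the previously added atom, enlarging an $M_k$ sublattice at the bottom. Modularity and pairwise nonisomorphism are verified directly for this family of $2^{n-3}$ lattices, whose $\alpha/\beta$ history can be read off from the atom structure; no statement about \emph{all} modular lattices of smaller size is needed. If you want to salvage your framework, the realistic fix is to abandon the claim $\nu_n\ge m_{n-2}$ and instead apply two explicit, provably nonisomorphic extensions only to the lattices you have already built, which is exactly what the paper does.
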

\begin{proof}
Let $L_3$ be the three element lattice with 0 and 1 as bottom and top
respectively. Consider the following two extensions of an $n$-lattice
$L$:
$$L_\alpha:=L^A \text{ where } A=\{x\in L\mid x\succ 0\}$$
$$L_\beta:=L^{\{b\}} \text{ for an arbitrary } b \text{ such that }
b\succ n-1$$ 
We declare an $n$-lattice $L$ to be an $\alpha$-lattice or
a $\beta$-lattice if it is obtained through the $\alpha$ or $\beta$
construction respectively.
 	
We want to show that, starting with $L_3$, in each step
this construction generates
two more modular lattices that are nonisomorphic to all
other lattices in the collection.
 	
It is clear that if $L$ is a modular lattice, $L_\alpha$ is also
modular since it is the same lattice with an element added at the bottom.
 	
For $L_\beta$, we consider the cases where $L$ is a modular $\alpha$-lattice and
a modular $\beta$-lattice obtained through this
construction.  If $L$ is an $\alpha$-lattice, then there are two atoms
$n$ and $n-1$ in $L_\beta$, both of which are covered by $b$,
independently of the choice of $b$.  Therefore, $n$ and $n-1$ satisfy
(lower) semimodularity.  Since the new element $n$ is not the common
cover or co-cover of any two elements in $L_\beta$ and $L$ is modular,
it follows that $L_\beta$ is modular.

If $L$ is a $\beta$-lattice, then there is only one choice of $b$ (the
element used in the previous step) since there is only one cover for
$n-1$.  Notice that the first $\beta$ step starting from an
$\alpha$-lattice (or $L_3$) will generate an $M_2$ sublattice formed
by the bottom element $0$, the two atoms $n$ and $n-1$ and the cover
$b$ used.  After $k$ successive $\beta$ steps, there will be an
$M_{k+1}$ sublattice formed by $0$, $b$, and the atoms
$n-k$,\ldots,$n$.  Therefore, after any $\beta$ step, the new element
will share a common cover ($b$) with all the atoms it shares a common
co-cover ($0$) with, and vice versa.  Since all other elements of
$L_\beta$ satisfy (lower) semimodularity by modularity of $L$, it
follows that $L_\beta$ is modular.

Next, we want to show that $L_\alpha$ and $L_\beta$ are not isomorphic to
any other lattice obtained from this construction. Consider two modular lattices
$L$ and $M$ generated by the $\alpha$-$\beta$ construction.
Since $L_\alpha$ has a unique atom and $M_\beta$ has more than one atom,
it follows that $L_\alpha\not\cong M_\beta$.

Suppose $L_\alpha \cong M_\alpha$. Then, $L$ and $M$ can be
reconstructed by removing an atom in $L_\alpha$ and $M_\alpha$
respectively. Clearly, $L\cong M$, since there is only one atom to be
removed.

If $L_\beta \cong M_\beta$, there is more than one choice of atom.
Let $k+1$ be the number of atoms in both, then they both have
been obtained through $k$ $\beta$ steps, and the $k$ atoms added
through $\beta$ steps are automorphic. Therefore removal of any of
these atoms in $L_\beta$ and $M_\beta$ will generate two isomorphic
modular lattices, hence $L \cong M$. Consequently no two
non-isomorphic lattices can generate isomorphic lattices through the
$\alpha$-$\beta$ construction.
    
We conclude the proof by induction. For $n=3$, there are $2^{3-3}=1$
modular lattices in the $\alpha-\beta$ construction (the initial $L_3$
lattice). Assume there are $2^{n-3}$ non-isomorphic modular lattices of
size $n$ constructed via the $\alpha$-$\beta$ construction. Then
each of these lattices will produce 2 new modular lattices which are
not isomorphic to any of the lattices produced by any other non-isomorphic
lattice. As a result, there are $2^{n+1-3}$ non-isomorphic modular
lattice of size $n+1$, thus completing the induction.
 	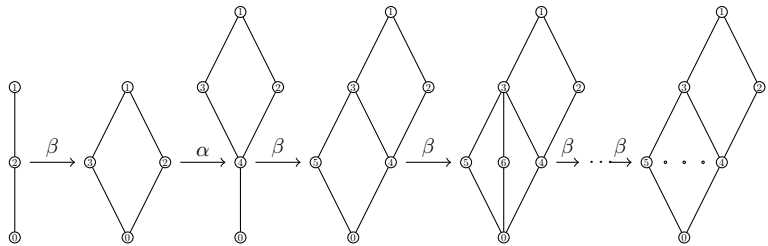
\begin{figure}
 	\begin{tikzpicture}[every node/.style={draw, circle, inner sep=0.8pt,scale=0.4}]
	\node (a0) at(0,0){0};
	\node (a2) at($(a0)+(0,1)$){2};
	\node (a1) at($(a0)+(0,2)$){1};
	\draw(a2)edge(a1)edge(a0);
	\draw [->,black] ($(a0)+(0.2,1)$) -- +(0.6,0);
	\node (b0) at($(a0)+(1.5,0)$){0};
	\node (b2) at($(b0)+(0.5,1)$){2};
	\node (b3) at($(b0)+(-0.5,1)$){3};
	\node (b1) at($(b0)+(0,2)$){1};
		\node [scale=2,draw=none, circle=none, fill=none, above] at($(a0)+(0.5,1)$){$\beta$};
	\draw(b2)edge(b1)edge(b0);
	\draw(b3)edge(b1)edge(b0);
	
	\draw [->,black] ($(b0)+(0.7,1)$) -- +(0.6,0);
	\node (c0) at($(b0)+(1.5,0)$){0};
	\node (c2) at($(c0)+(0.5,2)$){2};
	\node (c3) at($(c0)+(-0.5,2)$){3};
	\node (c4) at($(c0)+(0,1)$){4};
	\node (c1) at($(c0)+(0,3)$){1};
	\node [scale=2,draw=none, circle=none, fill=none, above] at($(b0)+(1,1)$){$\alpha$};
	\draw(c2)edge(c1)edge(c4);
	\draw(c3)edge(c1)edge(c4);
	\draw(c4)edge(c0);
	
	\draw [->,black] ($(c0)+(0.2,1)$) -- +(0.6,0);
	\node (d0) at($(c0)+(1.5,0)$){0};
	\node (d2) at($(d0)+(1,2)$){2};
	\node (d3) at($(d0)+(0,2)$){3};
	\node (d4) at($(d0)+(0.5,1)$){4};
	\node (d5) at($(d0)+(-0.5,1)$){5};
	\node (d1) at($(d0)+(0.5,3)$){1};
	\node [scale=2,draw=none, circle=none, fill=none, above] at($(c0)+(0.5,1)$){$\beta$};
	\draw(d2)edge(d1)edge(d4);
	\draw(d3)edge(d1)edge(d4);
	\draw(d4)edge(d0);
	\draw(d5)edge(d0)edge(d3);
	
	\draw [->,black] ($(d0)+(0.7,1)$) -- +(0.6,0);
	\node (e0) at($(d0)+(2,0)$){0};
	\node (e2) at($(e0)+(1,2)$){2};
	\node (e3) at($(e0)+(0,2)$){3};
	\node (e4) at($(e0)+(0.5,1)$){4};
	\node (e5) at($(e0)+(0,1)$){6};
	\node (e6) at($(e0)+(-0.5,1)$){5};
	\node (e1) at($(e0)+(0.5,3)$){1};
	\node [scale=2,draw=none, circle=none, fill=none, above] at($(d0)+(1,1)$){$\beta$};
	\draw(e2)edge(e1)edge(e4);
	\draw(e3)edge(e1)edge(e4);
	\draw(e4)edge(e0);
	\draw(e5)edge(e0)edge(e3);
	\draw(e6)edge(e0)edge(e3);
	
	\draw [->,black] ($(e0)+(0.7,1)$) -- +(0.3,0);
	\node [scale=2,draw=none, circle=none, fill=none,right] at($(e0)+(1.1,1)$){\ldots};
		\draw [->,black] ($(e0)+(1.4,1)$) -- +(0.3,0);
	\node (f0) at($(e0)+(2.4,0)$){0};
	\node (f2) at($(f0)+(1,2)$){2};
	\node (f3) at($(f0)+(0,2)$){3};
	\node (f4) at($(f0)+(0.5,1)$){4};
	\node (f5) at($(f0)+(-0.25,1)$){};
	\node (f6) at($(f0)+(-0.5,1)$){5};
	\node (f7) at($(f0)+(0,1)$){};
	\node (f8) at($(f0)+(0.25,1)$){};
	\node (f1) at($(f0)+(0.5,3)$){1};
	\node [scale=2,draw=none, circle=none, fill=none, above] at($(e0)+(0.85,1)$){$\beta$};
	\node [scale=2,draw=none, circle=none, fill=none, above] at($(e0)+(1.55,1)$){$\beta$};
	\draw(f2)edge(f1)edge(f4);
	\draw(f3)edge(f1)edge(f4);
	\draw(f4)edge(f0);
	\draw(f6)edge(f0)edge(f3);
	\end{tikzpicture}
\caption{Example of a particular path of the $\alpha-\beta$
  construction. Note that the $\alpha$ construction adds a single
  join-irreducible atom, and the $\beta$ construction adds an extra
  element to the lower $M_k$ sublattice.}
\end{figure}
\end{proof}

Let $l_n$ and $d_n$ be the number of (nonisomorphic) 
lattices and distributive lattices of size $n$.
Lower and upper bounds for these sequences are given in \cite{KW80}
and \cite{EHR02} respectively:
$$
(2^{\sqrt2/4})^{(n-2)^{3/2}+o((n-2)^{3/2})}< l_n< 6.11343^{(n-2)^{3/2}}
$$
$$
1.81^{n-4}<d_n<2.46^{n-1}.
$$ 
The lower bound for modular lattices obtained in the preceding theorem 
can be improved slightly for $n>7$ by
counting a larger class of planar modular lattices. However, it seems that
currently the best known upper bound for (semi)modular lattices is the
same as the one for all lattices.

As suggested by one of the referees, we conclude with some
observations about possible future research. There are several
alternative representations for (finite) modular lattices, based on
partial order geometries (see e.g. \cite{BC85, FH81}) or join-covers
or the incidence of join and meet irreducibles. It is possible that
enumeration algorithms using these representations would be more
efficient, but this has not (yet) been explored. The algorithm we use
can also be fairly easily adapted to other classes of lattices, such
as 2-distributive lattices or lattices of breadth $\le 2$, either with
or without adding modularity. However, any enumeration algorithm
similar to the one presented here that builds lattices by adding
elements one-by-one cannot build only modular lattices if it is
supposed to build all modular lattices (this can be seen for example
by removing any element from the subspace lattice of a finite
projective plane, see Figure~\ref{fano}).
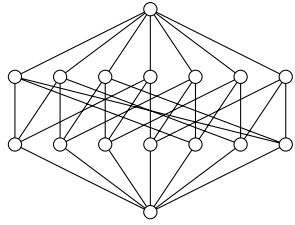
\begin{figure}
\tikzstyle{every node}=[draw,fill=white,circle,inner sep=0pt,minimum size=5pt]
\begin{tikzpicture}[scale=.6]
\draw(-3,1.5)--(-2,3)--(2,1.5)--(3,3)--(0,1.5)--(1,3)--(-2,1.5)--(-1,3)--(3,1.5)--(-3,3)--(1,1.5)--(2,3)--(-1,1.5)--(0,3)--(-3,1.5);
\draw(0,0)node{}--(-3,1.5)node{}--(-3,3)node{}--(0,4.5)node{}--
(-2,3)node{}--(-2,1.5)node{}--(0,0)--(-1,1.5)node{}--(-1,3)node{}--(0,4.5)--
(0,3)node{}--(0,1.5)node{}--(0,0)--(1,1.5)node{}--(1,3)node{}--(0,4.5)--
(2,3)node{}--(2,1.5)node{}--(0,0)--(3,1.5)node{}--(3,3)node{}--(0,4.5);
\end{tikzpicture}
\caption{The subspace lattice of the Fano plane.}\label{fano}
\end{figure}

We would like to thank Michael Fahy and Nikos Hatzopoulus for help
with operating the computing clusters. We also thank the 
Summer Undergraduate Research Fellowship at Chapman University for 
financial support.

\end{document}